\documentclass[11pt]{amsart}
\usepackage{lineno}
\setlength{\textheight}{48pc}
\setlength{\textwidth}{30pc}
\usepackage{amsmath}
\usepackage{amssymb}
\usepackage{url}
\usepackage{amscd}
\usepackage{texdraw}
\usepackage{color}
\usepackage{graphicx}
\usepackage{tikz}
\usepackage{tikz-3dplot}
\usepackage{mathtools}

\numberwithin{equation}{section}

\theoremstyle{plain}
\newtheorem{definition}{Definition}
\newtheorem{thm}{Theorem}

\newtheorem{lem}{Lemma}

\newtheorem{pro}{Proposition}

\newtheorem{cor}{Corollary}
\newtheoremstyle{step}{}{}{}{}{}{:}{ }{}
\theoremstyle{step}

\newenvironment{case}[1]{\par\noindent\textit{Case #1:}}{\par}


\def\leq{\leqslant}
\def\geq{\geqslant}

\def\Ra{\Rightarrow}

\newcommand{\abs}[1]{\lvert#1\rvert}
\newcommand{\Ss}{\Bbb{S}^1}

\newcommand{\R}{\Bbb{R}}
\newcommand{\D}{\mathbb{D}}
\newcommand{\Dc}{\Bar{\Bbb{D}}}

\newcommand{\C}{\Bbb{C}}

\newcommand{\DeltaEFn}{\Delta(E_n,F_n)}
\newcommand{\DeltaEFnprime}{\Delta(E_n^\prime,F_n^\prime)}
\renewcommand{\bar}[1]{\overline{#1}}
\newcommand{\xnp}{x_n'}
\newcommand{\anp}{a_n'}
\newcommand{\bnp}{b_n'}
\newcommand{\dnp}{d_n'}
\newcommand{\anr}{\hat{a}_n}
\newcommand{\bnr}{\hat{b}_n}
\newcommand{\arc}[1]{\overset{\frown}{#1}}


\begin{document}

\title
{On Asymptotically symmetric embeddings and conformal maps}
\author{Ylli Andoni}
\address{ Department of Mathematics, Emory University, Atlanta, GA, 30322 USA}
\email{ylli.andoni@emory.edu}

\author{Shanshuang Yang}
\address{ Department of Mathematics, Emory University, Atlanta, GA, 30322 USA}
\email{syang05@emory.edu}

\begin{abstract}
This paper is devoted to the study of conformal maps of the unit disk $\D$ in the plane onto a bounded Jordan domain $G$. The main aim is to show that such a map is asymptotically symmetric if and only if $G$ is bounded by a symmetric quasicircle.
\end{abstract}
\subjclass[2020]{Primary 30C20, 30C62; Secondary 30C35} 
\keywords{Asymptotically conformal, asymptotically symmetric, symmetric quasidisk, symmetric quasicircle, quasisymmetric}
\maketitle

\section{Introduction}
Let $f$ be a conformal map from the unit disk $\D$ in the complex plane $\C$ onto a bounded Jordan domain $G$. There is a rich theory about the interplay between the analytic properties of the map $f$ and the geometric properties of the boundary Jordan curve $\partial G$ (see \cite{pommerenke2013boundary} for many classical results). For example, one can use quasisymmetry of $f$ to characterize a quasicircle as follows.

\vspace{0.5cm}
{\bf Theorem A.} {\it Let $f: \D\rightarrow G$ be a conformal map. Then the following conditions are equivalent.\\
(a) $J=\partial G$ is a quasicircle;\\
(b) $f$ is quasisymmetric on $\D$;\\
(c) The homeomorphic extension of $f$ to the boundary is quasisymmetric.}

\vspace{0.5cm}
Recall that a Jordan curve $J$ is called a {\it quasicircle} if it is the image  of the unit circle $\Ss$ under a quasiconformal map of $\C$ onto itself. This class of Jordan curves has been extensively studied and dozens of characterizations have been found across different areas of mathematics (see \cite{gehring2012ubiquitous} and the references therein). One characterization is the following simple geometric property, often adopted as the geometric definition (see \cite[Definition~5.4]{pommerenke2013boundary}):
A Jordan curve $J$ is a quasicircle if and only if there exists a constant $M$ such that 
\begin{equation}
    {\textbf{diam}}J(a,b)\leq M|a-b|
\end{equation}
for all $a,b\in J$, where (and in what follows) $J(a,b)$ denotes the arc of smaller diameter of $J$ between $a$ and $b$. \par
Moreover, a {\it quasisymmetric embedding} abbreviated {\it QS} is an embedding $f\colon X\to Y$ between metric spaces such that there exists an increasing homeomorphism $\eta\colon [0,\infty)\to [0,\infty)$ so that, 
\begin{equation}
    \frac{\abs{x-a}}{\abs{x-b}}\leq t\implies \frac{\abs{f(x)-f(a)}}{\abs{f(x)-f(b)}}\leq \eta(t)
\end{equation}
for all distinct points $x, a, b\in X$. 
The concept of quasisymmetry was introduced by Ahlfors and Beurling in their study of boundary behavior of qusiconformal maps of the upper half plane onto itself \cite{boundary_correspondence}. The above general definition of quasisymmetry in a metric space setting is due to Tukia and V\"ais\"al\"a \cite{tukia1980quasisymmetric}.
For more on the theory of quasisymmetric maps, from the most general point of view offered by metric spaces, we refer the reader to \cite{heinonen2001lectures}.

For a brief moment we return our attention to Theorem A once again. We note that Theorem A is a combination of several well-known results. The equivalence of (a) and (c) follows from \cite[Proposition~5.10]{pommerenke2013boundary} and \cite[Theorem~5.11]{pommerenke2013boundary}. The equivalence of (a) and (b) follows from \cite{tukia1980quasisymmetric}. See \cite[Theorem~3.12]{MR1057080} for a higher dimensional version of (c)$\Rightarrow$(b).

Motivated by the above characterization of quasicircles, the main purpose of this paper is to use the {\it asymptotic symmetry} property of conformal maps to characterize a special subclass of quasicircles, namely, {\it symmetric quasicircles}. 
Following \cite[Section~11.2]{pommerenke2013boundary}, a Jordan curve $J$ is called a {\it symmetric quasicircle} (or an {\it asymptotically conformal curve}) if 
\begin{equation}
    \max_{w\in J(a,b)}\frac{\abs{a-w}+\abs{w-b}}{\abs{a-b}}\to 1
\end{equation}
as $\abs{a-b}\to 0$. Further, we call a Jordan domain $G$ that is bounded by a symmetric quasicircle, a {\it symmetric quasidisk}. 

The notion of \textit{AS} embeddings, which we will use to characterize symmetric quasicircles, was first introduced in \cite{brania2004asymptotically} in its full generality, although weaker notions appear as early as in \cite{MR215986}. This class of mappings was later studied by Gardiner-Sullivan, see \cite{gardiner1992symmetric} for more applications of this. Therein the authors define a \textit{symmetric} map of the real line $\R$, as a quasisymmetric homeomorphism $f$, for which 
\[
\lim_{t\to0^+}\frac{f(x+t)-f(x)}{f(x)-f(x-t)}=1
\]
uniformly for all $x\in \R.$ Since then, this subclass of quasisymmetric homeomorphisms of the real line $\R$ has also played a significant role in the study of Teichm\"uller spaces (see for example \cite{MR3834655}). One can show that, similar to the Beurling-Ahlfors $M$-condition, this subclass of quasisymmetric homeomorphisms appears naturally as the boundary correspondence of quasiconformal maps of the upper half-plane onto itself that are {\it asymptotically conformal} on the real line (see \cite{MR2560683} for an argument). 

Recall that a homeomorphism $f: D\rightarrow G$ between domains in $\C$ is called {\it $K$-quasiconformal} (or $K$-QC) if it has locally square integrable partial derivatives with bounded local dilatation:
\[
K(f,z)=\frac{|f_z|+|f_{\bar{z}}|}{|f_z|-|f_{\bar{z}}|}\leq K
\]
for almost all $z\in D$. Furthermore, a quasiconformal map $f:\C\rightarrow\C$ is called {\it asymptotically conformal} on the unit circle if its local dilatation approaches $1$ as $|z|\rightarrow 1$, more precisely, if for any 
$\epsilon>0$ there exists $\delta>0$ such that $f$ is $(1+\epsilon)$-QC in the $\delta$-neighborhood of $\Ss$: $\{z: 1-\delta<|z|<1+\delta\}$. 
And asymptotical conformality on the real line can be defined similarly via a M\"obius transformation. 

As we stated above, the main purpose of this article is to characterize symmetric quasicircles in terms of the \textit{asymptotic symmetry} property of conformal maps, which is defined below. 
\begin{definition}[$AS$]
    An embedding $f\colon X\to Y$ between metric spaces is called asymptotically symmetric, or abbreviated $AS$ if for all $\epsilon>0$ and $t>0$ there exists a $\delta>0$ such that for all distinct points $x,a,b\in X$ contained in a ball of radius $\delta$
    \begin{align}\label{AS:property}
        \frac{\abs{x-a}}{\abs{x-b}}\leq t\implies \frac{\abs{f(x)-f(a)}}{\abs{f(x)-f(b)}}\leq (1+\epsilon)t.
    \end{align}
\end{definition}
Comparing the AS condition (1.4) with the QS condition (1.2), one notes that (1.4) is a localized but strengthened version of (1.2) by replacing $\eta(t)$ with $(1+\epsilon)t$. 
In the spirit of Theorem A, our main goal here is to establish the following characterizations for symmetric quasicircles. 
\begin{thm}\label{thm:main_theorem}
Let $f: \D\rightarrow G$ be a conformal map. Then the following conditions are equivalent.\\
(a) $J=\partial G$ is a symmetric quasicircle;\\
(b) $f$ is asymptotically symmetric on $\D$;\\
(c) The homeomorphic extension of $f$ to the boundary is asymptotically symmetric.
\end{thm}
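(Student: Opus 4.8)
The plan is to prove the cycle of implications $(a)\Rightarrow(c)\Rightarrow(b)\Rightarrow(a)$, leveraging the classical framework of Theorem A so that the hard analytic work is confined to understanding how the asymptotic condition interacts with the boundary extension. Throughout I will freely use that $f$ extends to a homeomorphism $\bar f\colon\bar\D\to\bar G$ (Carath\'eodory), and that the three conditions of Theorem A are available, so in particular $f$ is already known to be quasisymmetric in all three settings; the content here is the ``$1+\epsilon$ as scale $\to 0$'' refinement.

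For $(a)\Rightarrow(c)$: assume $J$ is a symmetric quasicircle. I would first record the quantitative reformulation of (1.3): for every $\epsilon>0$ there is $\rho>0$ so that whenever $a,b\in J$ with $|a-b|<\rho$, every $w\in J(a,b)$ satisfies $|a-w|+|w-b|\le(1+\epsilon)|a-b|$; this already forces the small subarcs of $J$ to be ``almost straight.'' The goal is to transfer this to a three-point AS estimate for $\bar f$ on $\Ss$. Given $x,a,b\in\Ss$ in a small arc, let $X=\bar f(x)$, $A=\bar f(a)$, $B=\bar f(b)$ be the corresponding points on $J$. I would then combine two ingredients: (i) the symmetric-quasicircle condition, which controls the geometry of the images, and (ii) a modulus/harmonic-measure estimate controlling how $\bar f$ distorts ratios of boundary distances at small scale. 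For the latter, the natural tool is the conformal modulus of the quadrilateral (or the family of curves) separating the pair $\{x,a\}$ from $\{x,b\}$ in $\D$; conformal invariance of modulus passes this to $G$, and the almost-straightness of small subarcs of $J$ plus the Teichm\"uller/Mori-type modulus estimates should pin $|X-A|/|X-B|$ to within a factor $1+o(1)$ of $|x-a|/|x-b|$ as the configuration shrinks. Concretely, I expect to use that on a symmetric quasicircle the ratio $\operatorname{diam}J(A,B)/|A-B|\to1$, so that $J(A,B)$ behaves like a line segment and the relevant extremal lengths converge to their Euclidean values.

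The implication $(c)\Rightarrow(b)$ is the ``extend from the boundary into the disk'' step: given that $\bar f|_{\Ss}$ is AS, one must show $f$ itself is AS on $\D$. Here I would localize — fix a small window on $\Ss$, precompose and postcompose with M\"obius transformations of $\D$ and of $G$ respectively to move this window to a standard position — and then use the reflection principle across $\Ss$ together with the fact that an AS boundary map, being in particular quasisymmetric with constant tending to $1$ at small scale, admits a quasiconformal extension whose dilatation tends to $1$ near the boundary (this is the classical link between symmetric maps and asymptotically conformal extensions, cf.\ the Gardiner–Sullivan circle of ideas referenced via \cite{gardiner1992symmetric}). Since $f$ and this extension agree on $\Ss$ and $f$ is conformal, a normal-families / compactness argument at shrinking scales — blow up near a boundary point, extract limits, and observe the limit map is a similarity — yields the three-point AS estimate (1.4) for interior triples as well. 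One also has to handle triples that are deep inside $\D$ relative to their mutual distances, but there the hyperbolic metric comparison shows the configuration is essentially a Euclidean-similarity image of a fixed-shape one, and conformality of $f$ gives the estimate directly.

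Finally $(b)\Rightarrow(a)$ should be the easiest: if $f$ is AS on $\D$, apply (1.4) to boundary-approaching triples to see $\bar f|_{\Ss}$ is AS, then given $a,b\in J$ close together with $w\in J(a,b)$, pull back to $\Ss$ and use the AS inequality in both directions (i.e.\ for $f$ and, after noting AS is essentially symmetric in its role, controlling $|A-W|$ and $|W-B|$ against $|A-B|$) to conclude $(|A-W|+|W-B|)/|A-B|\to1$, which is precisely (1.3). The main obstacle, as usual in this circle of results, is the two directions that cross the boundary — the modulus estimate in $(a)\Rightarrow(c)$ making the $1+\epsilon$ sharp rather than merely $\eta(t)$, and the extension step $(c)\Rightarrow(b)$ — and I expect the bulk of the paper's technical effort (and the genuinely new estimates beyond Theorem A) to sit there.
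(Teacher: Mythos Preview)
Your cycle runs $(a)\Rightarrow(c)\Rightarrow(b)\Rightarrow(a)$, whereas the paper runs $(a)\Rightarrow(b)\Rightarrow(c)$ and cites $(c)\Rightarrow(a)$ from \cite{brania2004asymptotically}. In fact the full equivalence $(a)\Leftrightarrow(c)$ is already established there, so your step $(a)\Rightarrow(c)$ is not new work, and your step $(b)\Rightarrow(a)$, as you write it, is really $(b)\Rightarrow(c)\Rightarrow(a)$ with the second arrow cited. The entire content therefore collapses to a single implication in both approaches: the paper proves $(a)\Rightarrow(b)$ directly, while you propose to prove $(c)\Rightarrow(b)$.

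There is a genuine gap in your sketch of $(c)\Rightarrow(b)$. The Gardiner--Sullivan extension theorem you invoke produces asymptotically conformal extensions of symmetric \emph{self}-homeomorphisms of $\Ss$; here $\bar f|_{\Ss}\colon\Ss\to J$ is not a self-map, and conjugating by a reference Riemann map $G\to\D$ to reduce to that case is circular (you would need the reference map to be AS, which is the statement being proved). The way out is to first cite $(c)\Rightarrow(a)$ and then use Pommerenke's theorem (Lemma~\ref{lemma:pommerenke}) to obtain an asymptotically conformal extension $F$ of $f$ to $\C$; at that point your blow-up/compactness idea becomes viable: rescaling $F$ around a boundary triple gives a normal family of $K$-QC maps whose dilatations tend to $1$ locally uniformly, so subsequential limits are global $1$-QC maps fixing $0$ and $\infty$, hence similarities, which preserve three-point ratios. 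Executed carefully this would be a legitimate --- and arguably shorter --- alternative proof of $(a)\Rightarrow(b)$.

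The paper's actual route for $(a)\Rightarrow(b)$ is quite different and more hands-on. It reformulates AS via sequences (Proposition~\ref{proposition:sequences_AS}), proves a cross-ratio preservation lemma for asymptotically conformal maps using Teichm\"uller ring moduli (Lemma~\ref{lem:Modulus_estimates}), and then runs an elaborate case analysis on the position of the triple $x_n,a_n,b_n$ relative to the circle $C_n$ they determine and relative to $\Ss$ --- the difficulty being that the modulus lemma controls four-point cross-ratios, not three-point ratios, so one must manufacture a suitable fourth point $d_n$ in each geometric configuration and sometimes supplement with Pommerenke's derivative condition (Lemma~\ref{lemma:pommerenke}(2)). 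The easy direction $(b)\Rightarrow(c)$ is then a two-line radial limit argument (Theorem~3), not the substantial step you anticipated.
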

Note that the equivalence of (a) and (c) was proved in \cite{brania2004asymptotically}, using the modulus of the Teichm\"uller ring domain and properties associated with it. The main focus of this article will be to show that (a) implies (b) and (b) implies (c). More characterizations of symmetric quasicircles can be found in \cite{MR3771265}, \cite{MR1314950},   \cite{wu2000symmetric}.\par
The first implication (a) $\Rightarrow$ (b) will be one of the central results of this paper. It involves the use of analytic properties of conformal mappings from the unit disk onto a symmetric quasidisk, and the use of the modulus of the Teichm\"uller ring domain. Therefore we devote Section $2$ to introducing these tools and discussing how these tools are to be used. In Section $3$, we will jump right into working out the details for the first implication. This can be found under Theorem 2. \par
It would then still remain to show that (b) implies (c). This will be a simple consequence of a limiting argument on the boundary of the unit disk. For completeness, this will be formulated in Theorem 3.\par
In Section $4$, we conclude this paper with some further comments and open questions on asymptotically symmetric embeddings of the unit disk. 
\section{Preliminaries}
As mentioned above, our main results use analytic techniques, modulus estimates from the theory of quasiconformal mappings, and some classical Euclidean geometry. The purpose of this section is therefore to provide a detailed overview of the analytical and modulus estimates tools needed in Section~\ref{Section:3}.

\subsection{A theorem of Pommerenke}
We start the discussion by citing a collection of results, which can be found in \cite{pommerenke2013boundary}.
\begin{lem}\cite[Theorem 11.1]{pommerenke2013boundary}\label{lemma:pommerenke}
    Let $f$ map $\D$ conformally onto the inner domain of the Jordan curve $J$. Then the following conditions are equivalent
    \begin{enumerate}
        \item $J$ is a symmetric quasicircle.
        \item $\frac{f(z)-f(\zeta)}{(z-\zeta)f^{\prime}(z)}\to 1$ as $|z|\to1^-$ uniformly for $\zeta\in\Dc$ with $\frac{|z-\zeta|}{1-|z|}\leq a$ for some $a>0.$
        \item $f$ has a quasiconformal extension to $\C$ that is asymptotically conformal on the unit circle. 
    \end{enumerate}
\end{lem}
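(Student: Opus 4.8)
This is \cite[Theorem~11.1]{pommerenke2013boundary}, which we will simply quote; for orientation, here is the circle of ideas that proves it. The plan is to close the loop $(1)\Rightarrow(2)\Rightarrow(3)\Rightarrow(1)$. A preliminary step is to unpack condition~(2): setting $g_z(w)=(f(z+(1-|z|)w)-f(z))/((1-|z|)f'(z))$, which is univalent on $\D$ with $g_z(0)=0$ and $g_z'(0)=1$, one sees that the quotient in (2) equals $g_z(w_0)/w_0$ with $w_0=(\zeta-z)/(1-|z|)$, so (2) says precisely that $g_z\to\mathrm{id}$ uniformly on $\{|w|\leq a\}$ as $|z|\to1^-$. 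By the classical coefficient bounds for normalized univalent functions the Taylor coefficients of the $g_z$ are uniformly controlled on such disks, so (2) is equivalent to each fixed coefficient converging to that of $\mathrm{id}$, i.e. to $(1-|z|)^{k-1}f^{(k)}(z)/f'(z)\to0$ for every $k\geq2$; and a Cauchy estimate shows that these all follow from the single little-Bloch condition $(1-|z|^2)f''(z)/f'(z)\to0$.

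The heart of the matter is $(1)\Rightarrow(2)$, where the purely metric flatness of $J$ must be turned into this analytic decay; I would do it by a normal-families and Carath\'eodory-kernel argument. Suppose (2) fails; then along some $z_n\to\Ss$ one has $|(1-|z_n|)f''(z_n)/f'(z_n)|\geq c>0$, and after a rotation we take $z_n\in(0,1)$. The domains $\Omega_n=\{w:z_n+(1-|z_n|)w\in\D\}$ contain $\D$ and converge in the kernel sense to the half-plane $H=\{\operatorname{Re} w<1\}$, while $\{g_{z_n}\}$ is a normal family; passing to a subsequence, $g_{z_n}\to g$ locally uniformly, with $g$ univalent, $g(0)=0$, $g'(0)=1$, and $g''(0)=\lim(1-|z_n|)f''(z_n)/f'(z_n)\neq0$, so $g\neq\mathrm{id}$. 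By the Carath\'eodory kernel theorem $g(H)$ is the kernel of the rescaled domains $\Omega_n^*=(G-f(z_n))/((1-|z_n|)f'(z_n))$; since the scale $(1-|z_n|)|f'(z_n)|$ tends to $0$ and $\partial\Omega_n^*$ is a rescaled copy of $J$, the flatness hypothesis~(1.3) forces $\partial\Omega_n^*$ to straighten near its point nearest $0$, so that $g(H)$ is a half-plane. A conformal bijection between two half-planes is a M\"obius map, and a M\"obius map carrying a half-plane onto a half-plane is affine unless its pole lies on the boundary line; but that exception is excluded here, because $J$, being a symmetric quasicircle, is a quasicircle, so by Theorem~A the map $f$ is quasisymmetric, and this bounds $|f(\zeta)-f(z_n)|$ by a fixed multiple of $(1-|z_n|)|f'(z_n)|$ whenever $\zeta\in\Ss$ and $|\zeta-z_n|\lesssim 1-|z_n|$, keeping the $g_{z_n}$ uniformly bounded near $\partial H$. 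Hence $g$ is affine, so $g=\mathrm{id}$, contradicting $g''(0)\neq0$.

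The remaining two implications are more routine. For $(2)\Rightarrow(3)$ one feeds the little-Bloch bound into Becker's reflection: since $(1-|z|^2)|f''(z)/f'(z)|<1$ near $\Ss$ (with the part of $\D$ away from $\Ss$ handled in the usual way), Becker's formula gives a quasiconformal extension of $f$ to the plane whose Beltrami coefficient at $z$ with $|z|>1$ is controlled by $(1-|w|^2)|f''(w)/f'(w)|$ at the reflected point $w=1/\bar{z}$, hence tends to $0$ as $|z|\to1^+$; that is exactly $K(f,z)\to1$. For $(3)\Rightarrow(1)$ one blows up the extension $F$ near a boundary point: its rescalings are quasiconformal self-maps of the Riemann sphere fixing $\infty$ with dilatation tending to $1$, so they converge to a conformal self-map fixing $\infty$, i.e. an affine map; thus $F$ is asymptotically affine on small scales and carries the short circular arc between $\alpha=F^{-1}(a)$ and $\beta=F^{-1}(b)$ (itself nearly a segment) onto an arc $J(a,b)$ lying within $o(|a-b|)$ of the chord $[a,b]$, which is~(1.3).

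The main obstacle is thus $(1)\Rightarrow(2)$: the other implications reduce to a Taylor expansion disciplined by Koebe's and the coefficient theorems, and to the by-now standard Becker-extension and blow-up arguments, whereas $(1)\Rightarrow(2)$ is the genuinely geometric step. Its delicate point is not merely to recognize that the rescaled image domains converge to a half-plane, but to rule out the non-affine M\"obius maps that also carry half-planes to half-planes; this is precisely why one must bring in the quasisymmetry of $f$ supplied by Theorem~A, and the argument also requires the minor but real check that the rescaled domains do not otherwise degenerate.
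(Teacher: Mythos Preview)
The paper does not prove this lemma at all: it is stated purely as a citation of \cite[Theorem~11.1]{pommerenke2013boundary} and used as a black box in Section~3. Your proposal correctly begins the same way---``we will simply quote''---and then adds a substantial expository sketch of Pommerenke's argument. So as far as matching the paper goes, you are aligned; the sketch is extra material the paper does not contain.

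That said, since you offer the sketch, a brief comment on it. The overall architecture (little-Bloch reformulation of (2), normal-families/kernel-convergence for $(1)\Rightarrow(2)$, Becker extension for $(2)\Rightarrow(3)$, blow-up for $(3)\Rightarrow(1)$) is indeed the standard route. The one place where your outline is a bit soft is the endgame of $(1)\Rightarrow(2)$: you assert that quasisymmetry of $f$ keeps the $g_{z_n}$ uniformly bounded ``near $\partial H$'' and thereby excludes a boundary pole for the limiting M\"obius map. What one actually needs is a uniform bound for $g_{z_n}$ on every compact subset of the closed half-plane (not just on $\D$), so that the kernel limit $g$ extends continuously to $\partial H$ and hence cannot blow up there; this does follow from quasisymmetry together with a distortion estimate of the type $|f'(z)|(1-|z|)\asymp\operatorname{dist}(f(z),J)$, but it is worth saying explicitly rather than leaving implicit. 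With that caveat the sketch is sound, and in any case it exceeds what the paper requires.
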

\subsection{The Teichm\"uller function $\Psi(t)$} 
The second main ingredient used in the proof of our main result is the Teichm\"uller function and the Teichm\"uller ring domain. The Teichm\"uller ring domain $R_{T}(t)$ for $t>0$ is the domain whose complement consists of the two disjoint continua $E=[-1,0]$ and $F=[t,\infty]$, lying on the extended real axis. The curve family that connects the two disjoint continua $E,F$ in $\C$ will be denoted by $\Delta(E,F)$, and the modulus of the curve family can be expressed as 
\[
M(\Delta(E,F))=\Psi(t)
\]
where $\Psi(t)$ is the Teichm\"uller function, which is continuous and strictly decreasing with 
\[
\Psi(0)=\lim_{t\to 0}\Psi(t)=\infty \quad {\text {and}} \quad \Psi(\infty)=\lim_{t\to \infty}\Psi(t)=0.
\]
Another fact about the Teichm\"uller function that will be of use to us is the following comparison principle. If $E,F$ are two disjoint continua in $\C$ with $a,b\in E$ and $c,d\in F$ then 
\[
M(\Delta(E,F))\geq \Psi(t)
\]
with $t$ the cross ratio of $a,b,c,d$ being given as $$t=[a,b,c,d]=\frac{\abs{b-c}}{\abs{a-b}}\frac{\abs{a-d}}{\abs{c-d}}.$$
\par
For an introduction to the theory of quasiconformal mappings, the modulus of curve families and their applications, see the classic reference by Vaisala \cite{vaisala2006lectures} which has stood the test of time or the more modern one by Gehring, Martin, Palka \cite{gehring2017introduction}.

\subsection{A cross ratio estimate using modulus}
Since the Teichm\"uller ring domain and Teichm\"uller function, in conjunction with the quasiconformal extension, will be used repeatedly to estimate some cross-ratios in our proof, we explicitly formulate the following lemma, which may be of independent interest and is crucial in the proof of (a) $\Rightarrow$ (b) in Theorem 1. It essentially states that the cross ratio of concyclic points is preserved in an infinitesimal sense under an asymptotically conformal embedding. Recall that a homeomorphism $f$ of $\C$ is called {\it asymptotically conformal} on the unit disk if 
\[
K(f,z)=\frac{|f_z|+|f_{\bar{z}}|}{|f_z|-|f_{\bar{z}}|}\rightarrow 1
\]
as $|z|\rightarrow 1$. See \cite{gutlyanskii1995local}, \cite{MR1608159}, \cite[Section~11]{pommerenke2013boundary} and the references therein for more on asymptotically conformal mappings and their properties.

\begin{lem}[Cross-ratio estimate]\label{lem:Modulus_estimates}
Let $f$ be a homeomorphism of $\C$ that is asymptotically conformal on the unit circle. For each $n$, let $x_n,a_n,b_n$ be distinct points in $\C$ and $C_n$ denote the unique circle (or straight line) determined by these points. 
Assume that the sequences $x_n,a_n,b_n$ converge to a common limit point $x\in\Ss$ and that the diameter of the component of $C_n-\{a_n,b_n\}$ containing $x_n$, denoted by $C_n(x_n)$, converges to zero. Furthermore, let $d_n$ be a fourth point on $C_n\backslash C_n(x_n)$, and if $C_n$ is a straight line we let $d_n=\infty$.
If the limit 
\[
\lim_{n\to\infty}\frac{\abs{x_n-a_n}}{\abs{x_n-b_n}}\frac{\abs{d_n-b_n}}{\abs{d_n-a_n}}= \tau
\]
exists with $0<\tau<\infty$, then 
\[
\lim_{n\to\infty}\frac{\abs{f(x_n)-f(a_n)}}{\abs{f(x_n)-f(b_n)}}\frac{\abs{f(d_n)-f(b_n)}}{\abs{f(d_n)-f(a_n)}}= \tau.
\] 
\end{lem}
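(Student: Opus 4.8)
The plan is to trap the image cross ratio between $\tau$ and $\tau$: on the domain side the relevant modulus is computed exactly via the Teichm\"uller ring, on the image side the comparison principle of Section~2.3 bounds that modulus below by $\Psi$ applied to the quantity we wish to control, and the bridge is a modulus distortion estimate for $f$ that uses asymptotic conformality only in a shrinking neighbourhood of the single point $x\in\Ss$. First, the reductions: since $\tau\in(0,\infty)$, for all large $n$ the points $x_n,a_n,b_n,d_n$ are distinct (otherwise $\tau_n\in\{0,\infty\}$ along a subsequence), and the case of a line $C_n$ with $d_n=\infty$ is handled by passing to $\hat\C$ and extending $f$ by $f(\infty)=\infty$ (a self-homeomorphism of $\C$ is proper). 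On $C_n$ the four points occur in the cyclic order $a_n,x_n,b_n,d_n$, since $x_n$ lies on the open arc $C_n(x_n)$ bounded by $a_n,b_n$ and $d_n$ on the complementary arc. Let $\alpha_1,\alpha_2,\alpha_3,\alpha_4$ be the consecutive closed subarcs of $C_n$ they cut off, with $\alpha_1$ joining $a_n$ to $x_n$, $\alpha_2$ joining $x_n$ to $b_n$, and so on; then $\alpha_1\cup\alpha_2=\overline{C_n(x_n)}$, so with $\rho_n:=\sup\{\abs{z-x}:z\in\overline{C_n(x_n)}\}$ the hypotheses force $\rho_n\to0$. Writing $\tilde\tau_n:=\dfrac{\abs{f(x_n)-f(a_n)}}{\abs{f(x_n)-f(b_n)}}\cdot\dfrac{\abs{f(d_n)-f(b_n)}}{\abs{f(d_n)-f(a_n)}}$ and $\tau_n$ for the given cross ratio, the goal is to deduce $\tilde\tau_n\to\tau$ from $\tau_n\to\tau$.

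\textit{Two exact moduli and the comparison bounds.} A M\"obius map sending $(x_n,b_n,a_n)$ to $(-1,0,\infty)$ sends $C_n$ to $\overline{\R}$, $\alpha_2$ to $[-1,0]$, $d_n$ to $\tau_n$ (positivity being forced by the cyclic order), and $\alpha_4$ to $[\tau_n,\infty]$; by conformal invariance of the modulus, $M(\Delta(\alpha_2,\alpha_4))=\Psi(\tau_n)$, and the same reasoning with $a_n$ and $b_n$ interchanged gives $M(\Delta(\alpha_1,\alpha_3))=\Psi(1/\tau_n)$. On the image side the four points need no longer be concyclic, but $f(\alpha_2),f(\alpha_4)$ are disjoint continua with $f(x_n),f(b_n)\in f(\alpha_2)$ and $f(a_n),f(d_n)\in f(\alpha_4)$, so the comparison principle yields $M(\Delta(f(\alpha_2),f(\alpha_4)))\geq\Psi(\tilde\tau_n)$; similarly $M(\Delta(f(\alpha_1),f(\alpha_3)))\geq\Psi(1/\tilde\tau_n)$.

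\textit{The distortion estimate and conclusion.} It remains to prove that $\limsup_n M(\Delta(f(\alpha_2),f(\alpha_4)))\leq(1+\epsilon)\Psi(\tau)$ for every $\epsilon>0$, together with the analogue for $\alpha_1,\alpha_3$. Given $\epsilon>0$, asymptotic conformality provides an $r>0$ with $f$ being $(1+\epsilon)$-quasiconformal on $B(x,3r)$ — legitimate once $r$ is small, since $B(x,3r)$ then lies in a thin annulus about $\Ss$ on which $K(f,\cdot)\leq1+\epsilon$ — and for $n$ large $\alpha_2\subset\overline{B(x,\rho_n)}\subset B(x,r)$. I would split $\Delta(f(\alpha_2),f(\alpha_4))$ into the curves contained in $f(B(x,2r))$ and the curves that leave it. The first subfamily is the $f$-image of a family of curves lying in $B(x,2r)$ and joining $\alpha_2$ to $\alpha_4$, so its modulus is at most $(1+\epsilon)M(\Delta(\alpha_2,\alpha_4))=(1+\epsilon)\Psi(\tau_n)$. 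Every curve of the second subfamily contains a subcurve lying in $f(B(x,2r)\setminus\overline{B(x,\rho_n)})$ and joining the $f$-image of $\partial B(x,\rho_n)$ to that of $\partial B(x,2r)$, so, by the overflowing principle and $(1+\epsilon)$-quasiconformality, that subfamily has modulus at most $(1+\epsilon)\cdot 2\pi/\log(2r/\rho_n)$, which tends to $0$. Subadditivity of the modulus now gives $\limsup_n M(\Delta(f(\alpha_2),f(\alpha_4)))\leq(1+\epsilon)\Psi(\tau)$, and the same argument applied to $\alpha_1,\alpha_3$ gives $\limsup_n M(\Delta(f(\alpha_1),f(\alpha_3)))\leq(1+\epsilon)\Psi(1/\tau)$. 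Combining with the comparison bounds, $\limsup_n\Psi(\tilde\tau_n)\leq(1+\epsilon)\Psi(\tau)$ and $\limsup_n\Psi(1/\tilde\tau_n)\leq(1+\epsilon)\Psi(1/\tau)$ for every $\epsilon>0$; since $\Psi$ is a continuous strictly decreasing bijection of $(0,\infty)$ onto itself with $\Psi(0^+)=\infty$, the first inequality forces $\liminf_n\tilde\tau_n\geq\tau$ and the second forces $\limsup_n\tilde\tau_n\leq\tau$, whence $\tilde\tau_n\to\tau$.

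\textit{The main obstacle.} The one genuinely delicate point is the distortion estimate, precisely because $f(\alpha_4)$ — and indeed $d_n$ itself — may lie far from $\Ss$, where nothing is assumed about $f$, so one cannot appeal to quasiconformality of $f$ globally. What rescues the argument is that $\alpha_2$ is confined to a ball about $x$ of radius $\rho_n\to0$, so any competing curve that escapes the fixed ball $B(x,2r)$ must first cross the round shell $B(x,2r)\setminus\overline{B(x,\rho_n)}$, whose modulus tends to $0$; the escaping curves thus carry vanishing modulus in the limit, and only the part of the family confined to $B(x,2r)$ — where $f$ is within $(1+\epsilon)$ of conformal — contributes.
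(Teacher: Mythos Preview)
Your proof is correct and follows essentially the same route as the paper: compute the domain modulus exactly as $\Psi(\tau_n)$ via M\"obius equivalence to the Teichm\"uller ring, bound the image modulus below by $\Psi(\tilde\tau_n)$ via the comparison principle, bridge the two by a modulus-distortion argument exploiting that one of the continua shrinks to $x$, and run the argument twice (with $\tau$ and $1/\tau$) for the two-sided estimate. The only cosmetic difference is in the splitting: the paper uses round balls $B(f(x),r_n)\subset B(f(x),\sqrt{r_n})$ in the \emph{image} (with $r_n$ the radius of $f(E_n)$ about $f(x)$), whereas you use $f(B(x,\rho_n))\subset f(B(x,2r))$ with $r$ fixed once $\epsilon$ is chosen; both produce an annulus whose modulus tends to $0$ and absorb the escaping curves, and both confine the remaining curves to a region where $f$ is $(1+\epsilon)$-QC. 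Your $\limsup/\liminf$ bookkeeping is equivalent to the paper's subsequence reduction.
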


\begin{proof}
    Let $x_n,a_n,b_n, d_n$ be sequences of points as described in the Lemma with $x_n,a_n,b_n$ converging to the same limit point $x\in \Ss$. 
    Denote the cross-ratio of $\{x_n,a_n,b_n,d_n\}$ and the cross-ratio of their images by $\tau_n$ and $\tau_n'$, respectively as follows:
    \[
    \tau_n=\frac{\abs{x_n-a_n}}{\abs{x_n-b_n}}\frac{\abs{d_n-b_n}}{\abs{d_n-a_n}}, \ 
\tau_n'=\frac{\abs{f(x_n)-f(a_n)}}{\abs{f(x_n)-f(b_n)}}\frac{\abs{f(d_n)-f(b_n)}}{\abs{f(d_n)-f(a_n)}}.
    \]
    Note that in the case $d_n=\infty$, the cross-ratios reduce to
     \[
    \tau_n=\frac{\abs{x_n-a_n}}{\abs{x_n-b_n}}, \ 
\tau_n'=\frac{\abs{f(x_n)-f(a_n)}}{\abs{f(x_n)-f(b_n)}}.
    \]

    Now assume $\tau_n\rightarrow\tau$ with $0<\tau<\infty$. Note that in order to prove $\tau_n'\rightarrow\tau$ we only need to show that any subsequence of $\{\tau_n'\}$ has a further subsequence that converges to $\tau$. Thus in the following argument we will freely pass to subsequences as needed in order for the limits involved to exist. We start with assuming $\tau_n'\rightarrow \tau'$ and aim to show that 
    \[\tau'=\tau.\]

    Next, let $E_n$ and $F_n$ denote the disjoint subarcs of $C_n$ joining $x_n$ to $ b_n$ and $a_n$ to $d_n$, respectively (see Figure 1 below). 
\begin{figure}[h]

\centering   

\begin{tikzpicture}[x=0.75pt,y=0.75pt,yscale=-.8,xscale=.8]

\draw   (228.77,132.1) .. controls (228.77,89.22) and (265.25,54.46) .. (310.25,54.46) .. controls (355.25,54.46) and (391.72,89.22) .. (391.72,132.1) .. controls (391.72,174.97) and (355.25,209.73) .. (310.25,209.73) .. controls (265.25,209.73) and (228.77,174.97) .. (228.77,132.1) -- cycle ;
\draw  [color={rgb, 255:red, 208; green, 2; blue, 27 }  ,draw opacity=1 ][fill={rgb, 255:red, 208; green, 2; blue, 27 }  ,fill opacity=1 ] (258.1,195.6) .. controls (258.1,192.84) and (260.09,190.6) .. (262.55,190.6) .. controls (265.01,190.6) and (267,192.84) .. (267,195.6) .. controls (267,198.36) and (265.01,200.6) .. (262.55,200.6) .. controls (260.09,200.6) and (258.1,198.36) .. (258.1,195.6) -- cycle ;
\draw  [color={rgb, 255:red, 208; green, 2; blue, 27 }  ,draw opacity=1 ][fill={rgb, 255:red, 208; green, 2; blue, 27 }  ,fill opacity=1 ] (230.1,157.6) .. controls (230.1,154.84) and (232.09,152.6) .. (234.55,152.6) .. controls (237.01,152.6) and (239,154.84) .. (239,157.6) .. controls (239,160.36) and (237.01,162.6) .. (234.55,162.6) .. controls (232.09,162.6) and (230.1,160.36) .. (230.1,157.6) -- cycle ;
\draw  [color={rgb, 255:red, 208; green, 2; blue, 27 }  ,draw opacity=1 ][fill={rgb, 255:red, 208; green, 2; blue, 27 }  ,fill opacity=1 ] (240.1,86.6) .. controls (240.1,83.84) and (242.09,81.6) .. (244.55,81.6) .. controls (247.01,81.6) and (249,83.84) .. (249,86.6) .. controls (249,89.36) and (247.01,91.6) .. (244.55,91.6) .. controls (242.09,91.6) and (240.1,89.36) .. (240.1,86.6) -- cycle ;
\draw  [color={rgb, 255:red, 208; green, 2; blue, 27 }  ,draw opacity=1 ][fill={rgb, 255:red, 208; green, 2; blue, 27 }  ,fill opacity=1 ] (379.1,165.6) .. controls (379.1,162.84) and (381.09,160.6) .. (383.55,160.6) .. controls (386.01,160.6) and (388,162.84) .. (388,165.6) .. controls (388,168.36) and (386.01,170.6) .. (383.55,170.6) .. controls (381.09,170.6) and (379.1,168.36) .. (379.1,165.6) -- cycle ;
\draw [color={rgb, 255:red, 208; green, 2; blue, 27 }  ,draw opacity=1 ][line width=2.25]    (234.55,157.6) .. controls (240,175.6) and (214,126.6) .. (244.55,86.6) ;
\draw [color={rgb, 255:red, 208; green, 2; blue, 27 }  ,draw opacity=1 ][line width=2.25]    (262.55,195.6) .. controls (274,208.6) and (357,226.6) .. (383.55,165.6) ;
\draw [color={rgb, 255:red, 80; green, 227; blue, 194 }  ,draw opacity=1 ]   (232,111.6) .. controls (272,81.6) and (302,232.6) .. (342,202.6) ;
\draw [color={rgb, 255:red, 80; green, 227; blue, 194 }  ,draw opacity=1 ]   (228.77,132.1) .. controls (268.77,102.1) and (326,220.6) .. (366,190.6) ;
\draw [color={rgb, 255:red, 80; green, 227; blue, 194 }  ,draw opacity=1 ]   (231,121.6) .. controls (218.08,131.31) and (162,240.6) .. (287,204.6) ;

\draw (232.55,154.2) node [anchor=south east] [inner sep=0.75pt]  [font=\small]  {$x_{n}$};
\draw (262.57,206.35) node [anchor=north] [inner sep=0.75pt]  [font=\small]  {$a_{n}$};
\draw (242.55,83.2) node [anchor=south east] [inner sep=0.75pt]  [font=\small]  {$b_{n}$};
\draw (385.55,169) node [anchor=north west][inner sep=0.75pt]  [font=\small]  {$d_{n}$};
\draw (202,101.4) node [anchor=north west][inner sep=0.75pt]    {$E_{n}$};
\draw (342,226.99) node [anchor=south west] [inner sep=0.75pt]    {$F_{n}$};

\end{tikzpicture}
\caption{The Teichm\"uller ring domain in the circle $C_n$.}
\label{fig:lemma}

\end{figure}
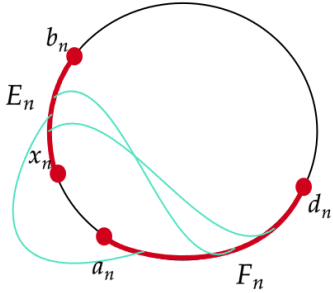
    Then the doubly connected domain $\bar\C\backslash (E_n\cup F_n)$ is M\"obius equivalent to the Teichm\"uller ring domain $R_T(\tau_n)$. Thus, by the conformal invariance of modulus,
    \[
    M(\DeltaEFn)=\Psi(\tau_n).
    \]
    Moreover, as a consequence of the extremal property of Teichm\"uller ring domain, we have 
    \[M(\DeltaEFnprime)\geq \Psi(\tau_n^\prime)\]
    for all $n$. \par
    Our goal is to find an upper bound for $M(\DeltaEFnprime)$ in terms of $M(\DeltaEFn).$ In order to achieve this we proceed as follows. \par
    Recall that $x\in \Ss$ is the common limit point of $x_n,a_n,b_n$. 
    Let 
    \[r_n=\sup \{\abs{f(z)-f(x)}\colon z\in E_n\}\ \text{and} \ R_n=\sqrt{r_n}.\]
    By continuity of $f$ and the fact that $E_n\subset C_n(x_n)$, we have $r_n\to 0$. Thus, since $f$ is asymptotically conformal on $\Ss$, for any $\epsilon>0$ there exists $\delta>0$ such that $f$ is $(1+\epsilon)$-QC in the $\delta$-neighborhood of $\Ss$: $\{z: 1-\delta<|z|<1+\delta\}$. In particular, 
    there exists an $N$ such that $f$ is $(1+\epsilon)$-QC in $f^{-1}(B(f(x),R_n))\subset\{z: 1-\delta<|z|<1+\delta\} $ for all $n\geq N$ as $x\in\Ss$ and $R_n\rightarrow 0$.
    Next, we decompose the curve family $\Gamma_n'=\DeltaEFnprime$ into two subfamilies:
    \[\Gamma_{n,1}^\prime=\{\gamma\in\DeltaEFnprime\colon \gamma\subset B(f(x),R_n)\}, \ \Gamma_{n,2}'=\Gamma_n'-\Gamma_{n,1}'.\]
    Note that each curve contained in $\Gamma_{n,2}'$ joins the circles $\Ss(f(x),r_n)$ and $\Ss(f(x),R_n)$. Then by the monotonicity of the modulus and the majorization principle we have the following chain of inequalities:
    \begin{align*}
        \Psi(\tau_n^{\prime})\leq M(\DeltaEFnprime)&\leq M(\Gamma_{n,1}^\prime)+M(\Gamma_{n,2}^\prime)\\
        &\leq(1+\epsilon)M(\Gamma_{n,1})+\frac{2\pi}{\log\left(\tfrac{R_n}{r_n}\right)}\\
        &\leq(1+\epsilon)M(\Gamma_{n})+\frac{2\pi}{\log\left(\tfrac{R_n}{r_n}\right)}\\
        &= (1+\epsilon)\Psi(\tau_n)+\frac{2\pi}{\log\left(\tfrac{1}{\sqrt{r_n}}\right)}.
    \end{align*}
    By letting $n\rightarrow\infty$ and then letting $\epsilon\rightarrow 0$, one can derive that 
    \[
    \Psi(\tau^\prime)\leq \Psi(\tau).
    \]
    Since the Teichm\"uller funciton is strictly decreasing, we have 
    \[
    \tau^\prime\geq\tau.
    \]
    
    By considering the conjugate configuration, the reverse inequality
    $\tau^\prime\leq \tau$ follows. More precisely, let $E_n$ and $F_n$ denote the disjoint subarcs of $C_n$ joining $x_n$ to $ a_n$ and $b_n$ to $d_n$, respectively. Then we have 
    \[
     M(\DeltaEFn)=\Psi(\frac{1}{\tau_n}), \ M(\DeltaEFnprime)\geq \Psi(\frac{1}{\tau_n^\prime}).
    \]
    Thus the above argument shows that
    \[
    \Psi(\frac{1}{\tau^\prime})\leq \Psi(\frac{1}{\tau}) \ \Rightarrow \ \frac{1}{\tau'}\geq\frac{1}{\tau},
    \]
    and the desired equality $\tau'=\tau$ follows.
\end{proof}

\section{Proof of Theorem 1}\label{Section:3}
Instead of diving headfirst into the proof, we first make some reductions that simplify the presentation and enhance readability. Thus the proof of the main theorem will be divided into $3$ subsections. The first subsection discusses the infinitesimal behavior of $AS$ maps. In the second subsection we will address the proof of Theorem $1$ (a)$\Ra$(b). The proof is by no means trivial, hence this subsection is the kernel of this article. In the final subsection we will discuss the boundary behavior of $AS$ mappings. This will be precisely the last missing part of Theorem $1$. Indeed, the subsection on the boundary correspondence of $AS$ embeddings will yield Theorem $1$ (b)$\Ra$(c). The chain of implications is thus complete, as (c)$\Ra$(a) can be found in \cite[Theorem 3.2]{brania2004asymptotically}.
\subsection{Infinitesimal behavior of $AS$ maps}
Towards the proof of Theorem 1, we first establish an equivalent description for the AS condition (1.4) by using the language of convergent sequences. This formulation plays a central role in our approach and may be of independent interest for the study of AS maps in general. 
\begin{pro}\label{proposition:sequences_AS}
    Let $f$ be an embedding of the unit disk into the complex plane $\C$. Then $f$ is an $AS$ embedding if and only if, for all sequences $x_n,a_n,b_n\in\D$ that converge to the same limit point $x\in\Dc$, the following is true: 
    \begin{equation}
    \lim_{n\rightarrow\infty}\frac{\abs{x_n-a_n}}{\abs{x_n-b_n}}= t \implies \ \lim_{n\rightarrow\infty} \frac{\abs{f(x_n)-f(a_n)}}{\abs{f(x_n)-f(b_n)}}=t
    \end{equation}
for any $t\in [0,\infty]$.
\end{pro}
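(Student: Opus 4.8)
The plan is a routine passage between the $\epsilon$--$\delta$ formulation~(1.4) and the sequential one~(3.1), relying only on compactness of $\Dc$ and the extraction of subsequences.

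For the implication that $AS$ implies~(3.1), suppose $f$ is $AS$ and let $x_n,a_n,b_n\in\D$ converge to a common point $x\in\Dc$ with $\abs{x_n-a_n}/\abs{x_n-b_n}\to t$. The first thing to record is that, since all three sequences converge to $x$, for every $\delta>0$ the triple $\{x_n,a_n,b_n\}$ eventually lies in a ball of radius $\delta$ (for instance one centered at a fixed $x_N$ with $\abs{x_N-x}$ small). When $0<t<\infty$, fix $\epsilon>0$; for $n$ large the Euclidean ratio is at most $(1+\epsilon)t$, so applying~(1.4) with parameters $(1+\epsilon)t$ and $\epsilon$ (and the corresponding $\delta$) gives $\abs{f(x_n)-f(a_n)}/\abs{f(x_n)-f(b_n)}\le(1+\epsilon)^2 t$ for $n$ large; letting $\epsilon\to 0$ yields $\limsup_n \abs{f(x_n)-f(a_n)}/\abs{f(x_n)-f(b_n)}\le t$. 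Applying the same reasoning to the reciprocal sequence $\abs{x_n-b_n}/\abs{x_n-a_n}\to 1/t$ gives the matching bound $\liminf_n \abs{f(x_n)-f(a_n)}/\abs{f(x_n)-f(b_n)}\ge t$, so the limit equals $t$. The degenerate cases are simpler: if $t=0$, then for any $\epsilon>0$ the Euclidean ratio is eventually $\le\epsilon$, so~(1.4) with parameter $\epsilon$ forces the image ratio to be eventually $\le 2\epsilon$, hence it tends to $0$; the case $t=\infty$ follows by passing to reciprocals.

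For the converse, assume~(3.1) holds for all sequences converging to a common point, and suppose toward a contradiction that $f$ is not $AS$. Negating~(1.4), there are $\epsilon_0>0$ and $t_0>0$ such that for every $n$ one may choose distinct $x_n,a_n,b_n\in\D$ lying in a ball of radius $1/n$ with
\[
\frac{\abs{x_n-a_n}}{\abs{x_n-b_n}}\le t_0
\qquad\text{but}\qquad
\frac{\abs{f(x_n)-f(a_n)}}{\abs{f(x_n)-f(b_n)}}>(1+\epsilon_0)t_0 .
\]
Since $\Dc$ is compact and the enclosing balls shrink to points, after passing to a subsequence all three sequences converge to a common $x\in\Dc$; passing to a further subsequence, the bounded ratios $\abs{x_n-a_n}/\abs{x_n-b_n}\in[0,t_0]$ converge to some $t\in[0,t_0]$. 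By~(3.1) the corresponding image ratios then converge to $t\le t_0<(1+\epsilon_0)t_0$, contradicting the strict lower bound that holds for every $n$. Hence $f$ is $AS$.

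I do not expect a genuine obstacle here; the only points that require care are the treatment of the limits $t=0$ and $t=\infty$ in the forward direction (where~(1.4) is invoked once rather than twice) and, in the converse, the bookkeeping of the nested subsequences, together with the observation that~(3.1), being quantified over \emph{all} admissible sequences, applies verbatim to the final sub-subsequence.
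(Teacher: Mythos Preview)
Your proof is correct and follows essentially the same approach as the paper's: the converse direction uses the identical contradiction argument via shrinking balls and subsequence extraction, and the forward direction handles the cases $t=0$, $t=\infty$, and $0<t<\infty$ in the same way, obtaining the lower bound in the finite case by applying the $AS$ condition to the reciprocal ratio. The only differences are cosmetic---you package the upper bound as $(1+\epsilon)^2 t$ where the paper writes $(1+\epsilon)(t+\epsilon_1)$---but the underlying argument is the same.
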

\begin{proof}
We first deal with the `if' part.
    Suppose, for the sake of contradiction, that $f$ is not $AS$. Then there exist some $\epsilon>0$ and $t>0$ such that for each $\delta_n=1/n$ there exist $x_n,a_n,b_n\in\D$, contained in a $\delta_n$-ball with
    \[
    \frac{\abs{x_n-a_n}}{\abs{x_n-b_n}}\leq t \ \text{and\  } \\\frac{\abs{f(x_n)-f(a_n)}}{\abs{f(x_n)-f(b_n)}}> (1+\epsilon)t.
    \]
    Then, by passing to subsequences if needed, we can assume that $x_n,a_n,b_n$ converge to a common limit point $x\in\bar\D$ and that 
    \[
    \frac{\abs{x_n-a_n}}{\abs{x_n-b_n}}\to t_{1}\leq t.
    \]
    However,
    \[
    \frac{\abs{f(x_n)-f(a_n)}}{\abs{f(x_n)-f(b_n)}}> (1+\epsilon)t> t\geq t_{1}
    \]
  for all $n\geq 1$. This contradicts our assumption (3.1). Hence $f$ must be AS.

\vspace{0.2cm}
Next we deal with the "only if" part.  
Suppose that $f$ is an $AS$ embedding and fix three sequences $x_n,a_n,b_n\in\D$ converging to the same limit point $x\in\bar\D$, with $\frac{\abs{x_n-a_n}}{\abs{x_n-b_n}}\to t\in [0,\infty]$ as n goes to infinity. We need to show that 
\begin{equation}
\lim_{n\rightarrow\infty}\frac{\abs{f(x_n)-f(a_n)}}{\abs{f(x_n)-f(b_n)}}=t.
\end{equation}
To this end, we first assume that $t=0$. Then, for any fixed $\epsilon_1>0$, there exists integer $N_1$ such that 
$$n\geq N_1 \ \Rightarrow \ \frac{\abs{x_n-a_n}}{\abs{x_n-b_n}}<\epsilon_1. $$
Furthermore, by the AS condition (1.4) and the fact that $x_n,a_n,b_n\rightarrow x$, for any $\epsilon>0$ there exists integer $N\geq N_1$ such that 
$$n\geq N \ \Rightarrow \ \frac{\abs{x_n-a_n}}{\abs{x_n-b_n}}<\epsilon_1 \ 
 \text{and} \ \frac{\abs{f(x_n)-f(a_n)}}{\abs{f(x_n)-f(b_n)}}\leq (1+\epsilon)\epsilon_1. $$
 This verifies (3.2) for the case $t=0$. By considering the reciprocal ratio (switching the role of $a_n$ and $b_n$), the case $t=\infty$ reduces to the case $t=0$. 

 For the case $0<t<\infty$, using the AS condition (1.4) again, one can deduce that for any $\epsilon_1,\epsilon>0$ there exists integer $N$ such that for all $n\geq N$, we have 
\begin{equation}\label{two_sided_AS}
t-\epsilon_1\leq \frac{\abs{x_n-a_n}}{\abs{x_n-b_n}}\leq t+\epsilon_1 \
\text{and} \ \frac{t-\epsilon_1}{1+\epsilon}\leq \frac{\abs{f(x_n)-f(a_n)}}{\abs{f(x_n)-f(b_n)}}\leq (1+\epsilon)(t+\epsilon_1). 
\end{equation}
 Since $\epsilon_1, \epsilon>0$ are arbitrary, (3.2) follows as desired.
\end{proof}

\subsection{Proof Theorem 1 (a)$\Ra$ (b)}
We are now fully equipped to deal with the proof of Theorem \ref{thm:main_theorem}. Recall, that in order to establish the validity of Theorem 1, it remains to show that (a) $\Rightarrow$ (b) $\Rightarrow$ (c). We formulate these two implications in Theorems 2 and 3, respectively. 
\begin{thm}\label{thm:a_ra_b}
    A conformal map $f$ of the unit disk $\D$ onto a symmetric quasidisk $G$ is an $AS$ embedding.
\end{thm}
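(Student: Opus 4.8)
The strategy is to verify the sequential characterization of $AS$ maps given in Proposition~\ref{proposition:sequences_AS}, using the analytic description of conformal maps onto symmetric quasidisks from Lemma~\ref{lemma:pommerenke} together with the cross-ratio estimate of Lemma~\ref{lem:Modulus_estimates}. So I would fix three sequences $x_n, a_n, b_n \in \D$ converging to a common point $x \in \Dc$ with $\abs{x_n-a_n}/\abs{x_n-b_n} \to t \in [0,\infty]$, and I must show $\abs{f(x_n)-f(a_n)}/\abs{f(x_n)-f(b_n)} \to t$. The first reduction: if $x$ lies in the open disk $\D$, then $f$ is conformal (hence smooth with nonvanishing derivative) near $x$, so the ratio is preserved in the limit by an elementary argument (the difference quotients $\frac{f(x_n)-f(a_n)}{x_n-a_n}$ and $\frac{f(x_n)-f(b_n)}{x_n-b_n}$ both tend to $f'(x) \neq 0$). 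Thus the real content is the case $x \in \Ss$.

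For $x \in \Ss$, I would invoke Lemma~\ref{lemma:pommerenke}: since $G$ is a symmetric quasidisk, $f$ extends to an asymptotically conformal homeomorphism of $\C$ (condition (3) of that lemma). The plan is then to introduce the circle $C_n$ through $x_n, a_n, b_n$ and a fourth point $d_n$ on the complementary arc $C_n \setminus C_n(x_n)$, and apply Lemma~\ref{lem:Modulus_estimates} to conclude that the four-point cross ratio $\tau_n = \frac{\abs{x_n-a_n}}{\abs{x_n-b_n}}\frac{\abs{d_n-b_n}}{\abs{d_n-a_n}}$ is infinitesimally preserved by $f$. The subtlety is that Lemma~\ref{lem:Modulus_estimates} hands back only the cross ratio with $d_n$ included, not the bare ratio $\abs{x_n-a_n}/\abs{x_n-b_n}$; so I need to choose $d_n$ so that the correction factor $\frac{\abs{d_n-b_n}}{\abs{d_n-a_n}}$ and its image analogue $\frac{\abs{f(d_n)-f(b_n)}}{\abs{f(d_n)-f(a_n)}}$ both tend to $1$, which would force $\tau_n \to t$ and $\tau_n' \to t \cdot \lim \frac{\abs{f(x_n)-f(a_n)}}{\abs{f(x_n)-f(b_n)}}^{-1}\cdots$ — i.e., would let me read off the bare image ratio. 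A natural choice is to take $d_n$ on $C_n$ "far" from the shrinking arc $C_n(x_n)$ — for instance the point of $C_n$ diametrically opposite (on the circle) to a point of $C_n(x_n)$, or, when $C_n$ has radius bounded below, a point at definite distance; when $C_n(x_n)$ shrinks while $C_n$ itself does not, $d_n$ stays away from $x$ and $\abs{d_n - a_n}, \abs{d_n - b_n}$ are comparable with ratio $\to 1$ since $a_n, b_n \to x \neq d_n$. When $C_n$ itself shrinks to $x$ (radius $\to 0$), one instead rescales, or takes $d_n = \infty$ after a Möbius normalization so that the cross ratio literally reduces to $\abs{x_n-a_n}/\abs{x_n-b_n}$ as noted in the proof of Lemma~\ref{lem:Modulus_estimates}; the asymptotic conformality of the $\C$-extension is Möbius-robust in the relevant sense because we only care about the ball $B(f(x), R_n)$ with $R_n \to 0$.

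I would organize the case $x \in \Ss$ as: (i) handle $t \in \{0, \infty\}$ separately — here one does not even need the full cross-ratio machinery, since a one-sided modulus estimate (the $\Gamma_{n,2}'$ part of the argument in Lemma~\ref{lem:Modulus_estimates}, controlling curves through a round annulus) shows the image ratio also tends to $0$ or $\infty$; (ii) for $0 < t < \infty$, select $d_n$ as above, verify $\abs{d_n-b_n}/\abs{d_n-a_n} \to 1$ and (using continuity of the extension of $f$, or asymptotic conformality again for the shrinking case) $\abs{f(d_n)-f(b_n)}/\abs{f(d_n)-f(a_n)} \to 1$, apply Lemma~\ref{lem:Modulus_estimates} to get $\tau_n' \to \tau = t$, and divide out the correction factor to conclude $\abs{f(x_n)-f(a_n)}/\abs{f(x_n)-f(b_n)} \to t$. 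The main obstacle I anticipate is exactly the bookkeeping in step (ii): engineering the fourth point $d_n$ uniformly across the two regimes (circle $C_n$ shrinking vs. not) so that both the source and target auxiliary ratios are asymptotically trivial, and making sure the hypotheses of Lemma~\ref{lem:Modulus_estimates} (the points $x_n, a_n, b_n$ converge to $x \in \Ss$, $\diam C_n(x_n) \to 0$, $d_n \in C_n \setminus C_n(x_n)$, cross ratio converges in $(0,\infty)$) are genuinely met — in particular ruling out degenerate configurations where $a_n$ or $b_n$ escapes the small arc $C_n(x_n)$, which may require a preliminary argument that $\abs{x_n - a_n}$ and $\abs{x_n - b_n}$ are comparable (forced by $0 < t < \infty$) and hence both $o(1)$, so all three points lie on a tiny sub-arc and $d_n$ can safely be placed on the rest of $C_n$.
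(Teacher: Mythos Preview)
Your overall strategy matches the paper's closely: verify Proposition~\ref{proposition:sequences_AS}, reduce to $x\in\Ss$ and $0<t<\infty$, pass to the asymptotically conformal extension from Lemma~\ref{lemma:pommerenke}(3), introduce $C_n$ and a fourth point $d_n$, and apply Lemma~\ref{lem:Modulus_estimates}. The reductions for $x\in\D$ and for $t\in\{0,\infty\}$ are fine (the paper uses the Cauchy integral formula and the quasisymmetry of the extension, respectively, but your versions are equivalent).

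The genuine gap is exactly where you flag the obstacle. When the radius $r_n$ of $C_n$ tends to $0$, every choice of $d_n\in C_n$ also converges to $x$, so continuity of the extension tells you nothing about the image correction ratio $\abs{f(d_n)-f(b_n)}/\abs{f(d_n)-f(a_n)}$; and ``M\"obius normalization to $d_n=\infty$'' is not legitimate here, since $C_n$ is a genuine shrinking circle and conjugating $f$ by a M\"obius map moves the unit circle on which asymptotic conformality is assumed. If moreover $\abs{x_n-a_n}/r_n$ stays bounded away from $0$ (the paper's Case~1.3), quasisymmetry of the extension only bounds the correction ratio and does not force it to $1$, and reapplying Lemma~\ref{lem:Modulus_estimates} to the triple $(d_n,a_n,b_n)$ would require a fifth auxiliary point, leading to a regress. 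So Lemma~\ref{lem:Modulus_estimates} alone cannot close the argument.

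The missing ingredient is Lemma~\ref{lemma:pommerenke}(2), the uniform asymptotic $\frac{f(z)-f(\zeta)}{(z-\zeta)f'(z)}\to 1$ for $\abs{z-\zeta}\le a(1-\abs{z})$. The paper's key step (Case~1.3.3 and its Claim~1) is a geometric argument showing that in this regime one can choose $d_n\in C_n\cap\D$ equidistant from $a_n,b_n$ and satisfying $(1-\abs{d_n})/r_n\ge\lambda>0$; then $\abs{d_n-a_n}/(1-\abs{d_n})$ and $\abs{d_n-b_n}/(1-\abs{d_n})$ are bounded, so Lemma~\ref{lemma:pommerenke}(2) applied with $z=d_n$ yields $\abs{f(d_n)-f(a_n)}/\abs{f(d_n)-f(b_n)}\to \abs{d_n-a_n}/\abs{d_n-b_n}=1$, which is precisely the image correction factor you need. (There is also a sub-case where $(1-\abs{x_n})/r_n\ge\lambda$ already; then Lemma~\ref{lemma:pommerenke}(2) at $z=x_n$ gives the conclusion directly, with no fourth point at all.) In short, the cross-ratio lemma is only half the machinery; Lemma~\ref{lemma:pommerenke}(2) is a genuinely second tool, and engineering $d_n$ so that it applies is the technical heart of the proof.
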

The proof of Theorem 2 is the main part of this article. It utilizes the analytic and geometric tools discussed in Section 2 and the description of $AS$ embeddings in the language of convergent sequences which was given in Proposition \ref{proposition:sequences_AS}. We will divide the proof of Theorem \ref{thm:a_ra_b} into subsections.

\subsubsection{Reduction and notation}
Let $f: \D\rightarrow G$ be a conformal map as in Theorem 2. Since $G$ is a symmetric quasidisk, by Lemma 1 $f$ has a quasiconformal extension to $\C$ that is asymptotically conformal on the unit circle. For the entire proof, we should take $f$ as such an extension. For simplicity of notation, the image under $f$ will be denoted by the `prime' notation: $p'=f(p)$ for a point or set $p$. 

In order to use Proposition \ref{proposition:sequences_AS} to show that $f$ is AS in $\D$, we fix sequences $x_n,a_n,b_n\in\D$, that converge to a common limit point $x\in\bar\D$ with
\begin{equation}
 \lim_{n\rightarrow\infty}\frac{\abs{x_n-a_n}}{\abs{x_n-b_n}}=t\in [0,\infty].
 \end{equation}
 According to Proposition \ref{proposition:sequences_AS}, we need to show that 
\begin{equation}
 \lim_{n\rightarrow\infty}\frac{\abs{x_n'-a_n'}}{\abs{x_n'-b_n'}}=t.
 \end{equation}
 Denote the ratios in (3.4) and (3.5) by $t_n$ and $t_n'$, respectively. Since the quasiconformal extension $f$ is also QS, if $t=0$ or $\infty$, (3.5) follows from (3.4) immediately by the QS property. 

 Another reduction we can make is that, if the common limit point $x$ is inside the unit disk, then (3.5) follows from the Cauchy integral formula for analytic functions. In fact, applying the Cauchy integral formula to $f(z)$ on a small fixed circle $|z-x|=r$, one can deduce that 
 $$\frac{x_n'-a_n'}{x_n-a_n}=\frac{1}{2\pi i}\int_{|z-x|=r}\cfrac{f(z)dz}{(z-x_n)(z-a_n)}.
 $$
 Thus it follows that 
 $$\frac{t_n'}{t_n}=\frac{|x_n'-a_n'|}{|x_n-a_n|}\cdot \frac{|x_n-b_n|}{|x_n'-b_n'|}\rightarrow 1. 
 $$

After these reductions, for the remainder of the proof, we assume that $x_n,a_n,b_n\rightarrow x\in \Ss$ and that $t_n\rightarrow t\in (0,\infty)$. Furthermore, in order to show that $t_n'\rightarrow t$, it suffices to show that each subsequence of $t_n'$ has a further subsequence that converges to $t$. Thus, in the argument below we will pass to subsequences freely as needed and still keep the original notation for the sequences involved. 

\subsubsection{Separated configuration}
For each $n$, let $C_n$ denote the unique circle (or straight line) that passes through the three distinct points $x_n,a_n,b_n$, $D_n$ the disk bounded by $C_n$, and $r_n$ the radius of $C_n$. We say that $a_n$ and $b_n$ are {\it separated by $x_n$} (hence the {\it separated configuration}), if they are on different semicircles of $C_n$ that are cut out by the diameter of $D_n$ through $x_n$. If the points are collinear, then we say that $a_n,b_n$ are separated by $x_n$ if $x_n\in[a_n,b_n]$. 
We shall treat the separated configuration in this subsection. The non-separated configuration will be treated in the next subsection by using a reflection argument combined with analytic properties of the conformal map $f$. By passing to subsequences if needed, we can divide the argument into the following three cases.

\vspace{0.3cm}
\begin{case}{1.1} $x_n,a_n,b_n$ are collinear (see Figure 2). In this case, by choosing the fourth point $d_n$ as $\infty$ in Lemma \ref{lem:Modulus_estimates}, one immediately derives that 
\[
\lim_{n\to\infty}\frac{\abs{x_n-a_n}}{\abs{x_n-b_n}}=t \ \implies \ \lim_{n\to\infty}\frac{\abs{\xnp-\anp}}{\abs{\xnp-\bnp}}=t.
\] 
\begin{figure}[h]
    \centering
    \includegraphics[width=1\linewidth]{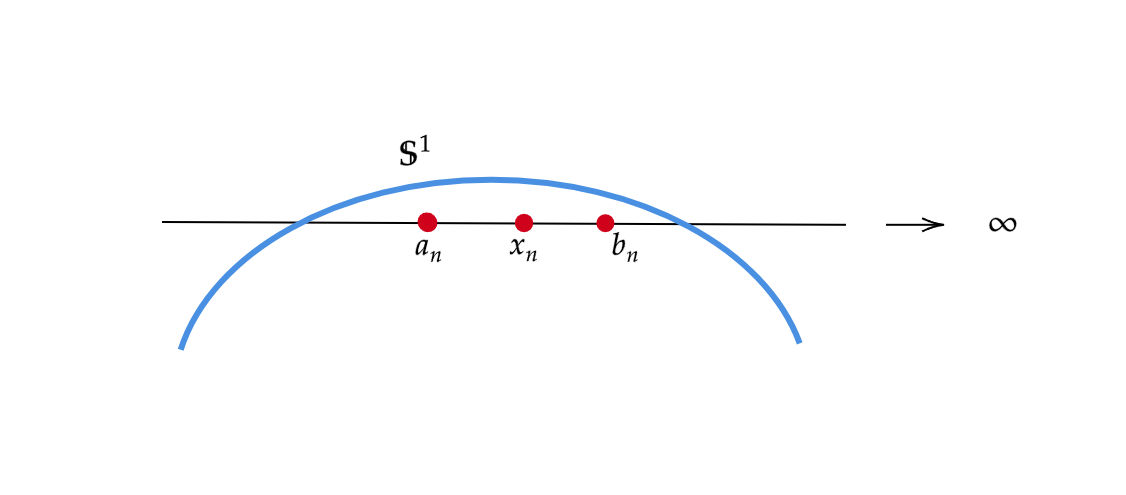}
    \caption{The collinear case in separated configuration}
    \label{fig:enter-label_1}
\end{figure}
\end{case}

\begin{case}{1.2} $x_n,a_n,b_n$ are not collinear and 
\begin{equation}\lim_{n\rightarrow\infty}\frac{\abs{x_n-a_n}}{r_n}= 0.
\end{equation}
In this case we shall apply Lemma 2 by letting $d_n$ be the point antipodal to $x_n$ on the circle  $C_n$. Observe that the point $d_n$ can potentially lie outside the unit disk. As in Lemma 2, let $\tau_n$ and $\tau_n'$ denote the corresponding cross-ratios:
 $$\tau_n=\frac{\abs{x_n-a_n}}{\abs{x_n-b_n}}\frac{\abs{d_n-b_n}}{\abs{d_n-a_n}}, \ \tau_n^{\prime}=\frac{\abs{\xnp-\anp}}{\abs{\xnp-\bnp}}\frac{\abs{\dnp-\bnp}}{\abs{\dnp-\anp}}.$$

We will use the limits of $\tau_n$ and $\tau_n'$ to derive the limit of $t_n'$ (hence (3.5)). We start with a few simple observations on how the points are relatively located. Since $\abs{x_n-d_n}=2r_n$, it follows from (3.6) that 
\begin{equation}\label{eq3.1}
    \lim_{n\rightarrow\infty}\frac{\abs{x_n-a_n}}{\abs{x_n-d_n}}=\lim_{n\rightarrow\infty}\frac{\abs{x_n-b_n}}{\abs{x_n-d_n}}=0.
\end{equation}
\begin{figure}[h]
    \centering
    \includegraphics[width=1\linewidth]{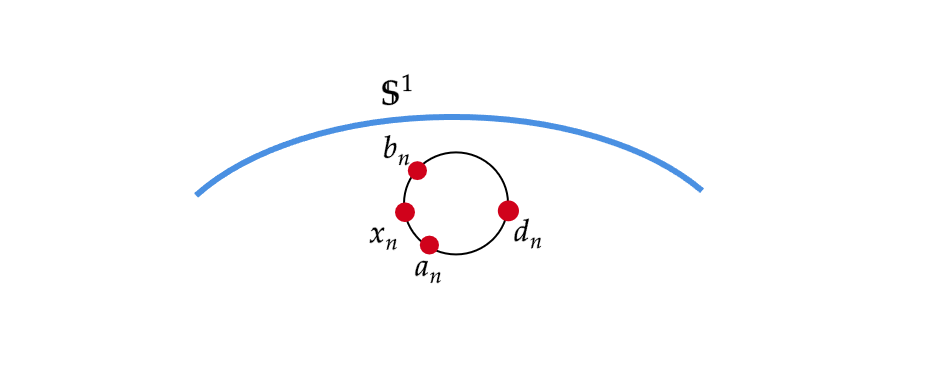}
    \caption{The point $d_n$ is a relative "infinity" to the points $x_n,a_n,b_n$.}
    \label{fig:enter-label_2}
\end{figure}\\
Due to (3.7), we have that $\abs{d_n-x_n}\geq \abs{x_n-a_n}$ for $n$ sufficiently large, and thus the following double-sided triangle inequality holds:
\begin{align*}
    \abs{d_n-x_n}-\abs{x_n-a_n}\leq \abs{d_n-a_n}\leq \abs{d_n-x_n}+\abs{x_n-a_n}.
\end{align*}
Moreover, dividing it by $\abs{d_n-x_n}$, and using (3.7) again, we conclude that 
\begin{equation}
    \frac{\abs{d_n-a_n}}{\abs{d_n-x_n}}\to 1.
\end{equation}
In a similar fashion we obtain, 
\begin{equation}
    \frac{\abs{d_n-b_n}}{\abs{d_n-x_n}}\to 1.
\end{equation}

 Next, by quasisymmetry of $f$, (3.7) holds for the image points of $x_n$, $a_n$,$b_n,d_n$: 
\begin{equation}\label{eq3.2}
       \lim_{n\rightarrow\infty}\frac{\abs{x_n'-a_n'}}{\abs{x_n'-d_n'}}=\lim_{n\rightarrow\infty}\frac{\abs{x_n'-b_n'}}{\abs{x_n'-d_n'}}=0.
\end{equation}
Thus, similar to (3.8) and (3.9), we obtain that
\begin{equation}
    \frac{\abs{\dnp-\anp}}{\abs{\dnp-\xnp}}\to 1\quad\text{and}\quad \frac{\abs{\dnp-\bnp}}{\abs{\dnp-\xnp}}\to 1.
\end{equation}

Finally, it follows from (3.8) and (3.9) that
$$\lim_{n\rightarrow\infty}\tau_n=\lim_{n\rightarrow\infty}t_n=t.$$
Therefore, from (3.11) and Lemma 2 one concludes that
$$\lim_{n\rightarrow\infty}t_n'=\lim_{n\rightarrow\infty}\tau_n'=\lim_{n\rightarrow\infty}\tau_n=t.$$
\end{case}
\vspace{0.3cm}
\begin{case}{1.3} $x_n,a_n,b_n$ are not collinear and 
\begin{equation}\lim_{n\rightarrow\infty}\frac{\abs{x_n-a_n}}{r_n}= r>0.
\end{equation}
The essential difference between this case and the above case is that, in the above case, (3.6) implies that the radius $r_n$ is much bigger than the distance $|x_n-a_n|$ and thus one could choose a fourth point $d_n$ on $C_n$ that is relatively far away from $x_n,a_n,b_n$, acting as the role of the point at infinity as in the collinear case. Therefore, the estimates on the four point cross-ratios $\tau_n$ and $\tau_n'$ can be transferred to estimates on the three point ratios $t_n$ and $t_n'$ as desired. This approach alone does not work in the current case. We need to bring in another tool, namely the behavior of the derivative of $f$ as stated in Lemma \ref{lemma:pommerenke} statement (2).

This is the most complicated case. In order to make the argument easier to follow, we further divide it into three subcases. However, before proceeding, we want to point out that the arguments in this case do not depend on the separation property described above.

\vspace{0.3cm}
\begin{case}{1.3.1} $x_n$ is relatively far away from the boundary of the unit disk so that the uniform convergence in Lemma \ref{lemma:pommerenke} can be applied. More precisely, assume that there is a constant $\lambda>0$ such that (for all large $n$) 
\begin{equation}
    \frac{1-|x_n|}{r_n}\geq\lambda.
\end{equation}
Then it follows that
\[
    \frac{\abs{x_n-a_n}}{1-\abs{x_n}}\leq\frac{2r_n}{1-|x_n|}\leq \frac{2}{\lambda}, \quad \frac{\abs{x_n-b_n}}{1-\abs{x_n}}\leq \frac{2}{\lambda}.
\]
Hence by the uniform convergence condition in Lemma \ref{lemma:pommerenke} (2),  one concludes that
\begin{align}\label{application:pomm2}
\frac{f(x_n)-f(a_n)}{(x_n-a_n)f'(x_n)}\to 1 \quad\text{and}\quad \frac{f(x_n)-f(b_n)}{(x_n-b_n)f'(x_n)}\to 1 
\end{align}
as $n\to\infty$. Furthermore,
\[
\frac{t_n'}{t_n}=\frac{\abs{f(x_n)-f(a_n)}}{\abs{x_n-a_n}\abs{f'(x_n)}}\cdot\frac{\abs{x_n-b_n}\abs{f'(x_n)}}{\abs{f(x_n)-f(b_n)}}.
\]
This combined with (\ref{application:pomm2}) and (3.4) (with $0<t<\infty$) gives the following limits as desired:
\begin{align}
    \lim_{n\to\infty}t_n'=\lim_{n\to\infty}t_n=t.
\end{align}
\end{case}

\vspace{0.3cm}
\begin{case}{1.3.2} Next, we consider the case when $a_n$ and $b_n$ are relatively close to one another:
\begin{equation}
    \lim_{n\rightarrow\infty}\frac{|a_n-b_n| }{r_n} =0.
\end{equation}
This case can be dealt with by a simple QS argument, similar to the one used to establish (3.11) above. In fact, it follows from (3.4), (3.12), and (3.16) that 
$$\lim_{n\rightarrow\infty}\frac{|a_n-b_n|}{|x_n-b_n|}=\lim_{n\rightarrow\infty}\frac{|a_n-b_n|}{r_n}\cdot
\frac{r_n}{|x_n-a_n|}\cdot\frac{|x_n-a_n|}{|x_n-b_n|}=0\cdot\frac{1}{r}\cdot t=0.$$
Thus, by quasisymmetry of $f$,
$$\lim_{n\rightarrow\infty}\frac{|a_n'-b_n'|}{|x_n'-b_n'|}=0.$$
Therefore, by routine application of triangle inequalities, one derives that 
$$\lim_{n\rightarrow\infty}\frac{|x_n-a_n|}{|x_n-b_n|}=1 \ \text{and} \ 
\lim_{n\rightarrow\infty}\frac{|x_n'-a_n'|}{|x_n'-b_n'|}=1.$$
\end{case}

\vspace{0.3cm}
\begin{case}{1.3.3} Finally, we derive (3.5) under the assumption that neither (3.13) nor (3.16) holds. By passing to subsequences again if needed, we may further assume that 
\begin{equation}   
\lim_{n\rightarrow\infty}\frac{1-|x_n|}{r_n}=0 \ \text{and} \ 
\lim_{n\rightarrow\infty}\frac{|a_n-b_n| }{r_n} =s>0.
\end{equation}
Towards the goal of deriving (3.5) from these assumptions, we shall construct a fourth point $d_n\in C_n\backslash C_n(x_n)$ as in Lemma 2 such that 
\begin{equation}
\frac{1-|d_n|}{r_n}\geq\lambda, \ \frac{|d_n-a_n|}{|d_n-b_n|}=1
\end{equation}
for some constant $\lambda>0$. To keep the flow of ideas, we postpone the construction of $d_n$ and proceed with such a fourth point being given. 

A direct application of Lemma 2, together with the second part of (3.18), yields that 
$$\lim_{n\rightarrow\infty}\frac{\abs{\xnp-\anp}}{\abs{\xnp-\bnp}}\frac{\abs{\dnp-\bnp}}{\abs{\dnp-\anp}}=\lim_{n\rightarrow\infty}\tau_n=t.
$$
Furthermore, replacing $x_n$ by $d_n$ in the argument for (3.14) and (3.15), using Lemma 1, we conclude that 
$$\lim_{n\rightarrow\infty}\frac{\abs{\dnp-\bnp}}{\abs{\dnp-\anp}}=\lim_{n\rightarrow\infty}\frac{\abs{d_n-b_n}}{\abs{d_n-a_n}}=1.
$$
Combining the above two yields (3.5) as desired.\par
We now turn to the construction of the point $d_n$ that satisfies (3.18) to complete the proof. Under the standing assumptions (3.12) and (3.17), we claim that the mid point $d_n$ of the arc $C(a_n,b_n)$ joining $a_n$ and $b_n$ on $C_n$ that does not contain $x_n$ will satisfy (3.18). In fact, the equality in (3.18) just follows from the definition of $d_n$. The inequality in (3.18) is not difficult to see from geometrical intuition. But it is rather technical to construct a rigorous proof as one can see below. First we rewrite the inequality in (3.18) in the limit form:
\begin{equation}
    \lim_{n\rightarrow\infty}\frac{\delta(d_n)}{r_n}=\lambda>0,
\end{equation}
where $\delta(\cdot)$ denotes the distance of a point to the unit circle and is for the convenience of argument to follow. 

Note that, since $r_n\rightarrow 0$ by (3.12), all the points $x_n,a_n,b_n,d_n$ accumulate at the common limit point $x\in\Ss$. Thus in estimating and comparing relevant distances near $x$, one can regard $\Ss$ as a line. More precisely, one can choose a M\"obius transformation $\varphi$ that maps the unit circle to the real line with $\varphi(x)=0$, $\varphi(-x)=\infty$, and $|\varphi'(x)|=1$. It follows that
$$\lim_{a,b\rightarrow x}\frac{|\varphi(a)-\varphi(b)|}{|a-b|}=|\varphi'(x)|=1.$$
Therefore, to verify (3.19) one can replace $\Ss$ by the real line $\R$.

To this goal, we first assume that $C_n$ is contained in the upper half plane. Denote by $w_n$ the south pole of $C_n$ (or the point on $C_n$ closest to $\R$). Since $\delta(x_n)/r_n\rightarrow 0$ by (3.17), may assume that $x_n$ is located in the lower left quarter of $C_n$. 
Let the upper case letter $X_n$ denote the angle subtended by the smaller arc from reference point $w_n$ to $x_n$ (and similarly for other points $a_n,b_n,d_n$ involved in the argument). 

With the help of Figure \ref{fig:C_n_not_in_D}, it follows from elementary geometry and trigonometry that 
$$\delta(x_n)\geq r_n-r_n\cos X_n, \ D_n>\min\{A_n,B_n\},
$$ and 
$$A_n=2\sin^{-1}\left(\frac{|x_n-a_n|}{2r_n} \right)\pm X_n, \
B_n=2\sin^{-1}\left(\frac{|x_n-b_n|}{2r_n} \right)\pm X_n,
$$
where the the sign $\pm$ depends on the relative positions of points involved. Letting $n\rightarrow\infty$, by (3.12), (3.17), and (3.4) one deduces that 
$$X_n\rightarrow 0, \ A_n\rightarrow A=2\sin^{-1}\left(\frac{r}{2}\right), \ B_n\rightarrow B=2\sin^{-1}\left(\frac{r}{2t}\right), \ \text{and}$$
$$\lim_{n\rightarrow\infty}D_n=D\geq\min\{A,B\}.$$
A simple trigonometry argument again shows that $\delta(d_n)\geq r_n-r_n\cos D_n$. Thus it follows that 
 $$\lim_{n\rightarrow\infty}\frac{\delta(d_n)}{r_n}=\lambda\geq 1-\cos D>0$$
 as desired.
\begin{figure}[h]
    \centering
    \includegraphics[width=1\linewidth]{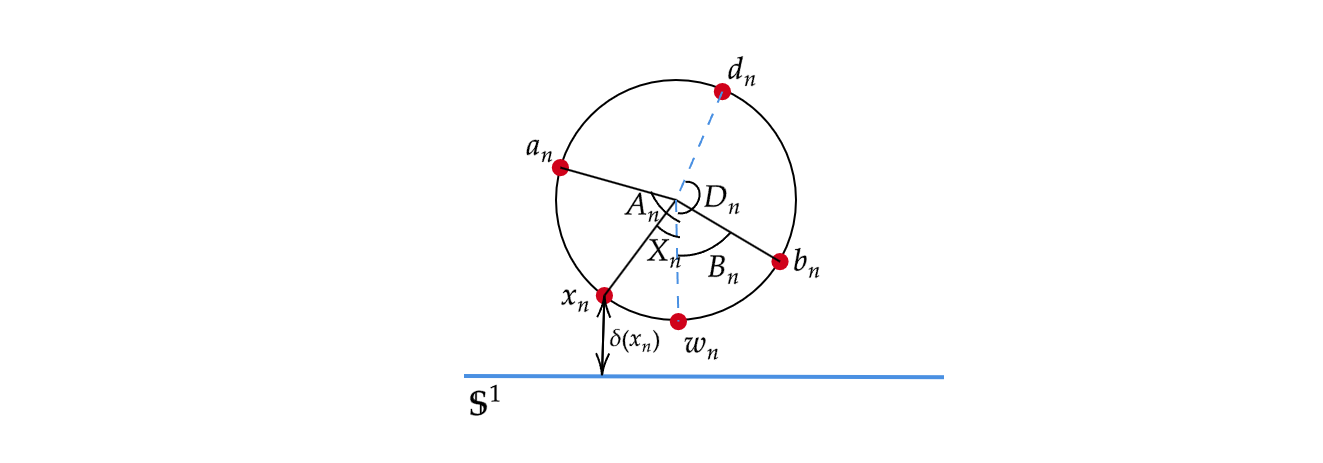}
    \caption{Construction of $d_n$ in Case 1.3.3 when $C_n$ is contained in $\D$.}
    \label{fig:C_n_not_in_D}
\end{figure}

Next, we consider the case when $C_n$ is not entirely contained in the upper half plane. It remains to show that, in this case, the middle point $d_n$ on the arc $C(a_n,b_n)$ also satisfies the distance condition (3.19). To this end, we first establish a claim. 

\vspace{0.3cm}
\textbf{Claim 1.} Let $w_n$ be the point on $C_n\cap\R$ that is closest to $x_n$. If $a_n$ or $b_n$ is contained in the shorter arc $\arc{x_nw_n}$ and (3.12) holds, then there exists a $\lambda_1>0$ such that $\frac{\delta(x_n)}{r_n}\geq \lambda_1$. (See Figure \ref{fig:case3_construction} for reference.)
\begin{proof}[Proof of Claim 1.]
Suppose without loss of generality that $b_n$ is contained in the arc connecting $x_n$ to $w_n$. 
Let $L$ denote the line through $b_n$ parallel to the real line $\R$, $\hat{b}_n$ the other intersection point of $L$ and $C_n$, and $\hat{x}_n$ the perpendicular point from $x_n$ to $L$ (See figure \ref{fig:case3_construction}).  
Consider the triangle with vertices at $x_n,b_n,\hat{x}_n$, and denote by $\beta_n$ the angle at $b_n$ of this triangle. 
Then a simple geometric observation based on the corresponding figure yields that
\begin{equation}
\beta_n=\frac{1}{2}\frac{l(\arc{x_n\hat{b}_n})} {r_n}\geq\frac{1}{2}\frac{|x_n-b_n|}{r_n}.
\end{equation}
This, together with (3.4) and (3.12), yields that 
$$\begin{aligned}\frac{\delta(x_n)}{r_n}\geq\frac{\abs{x_n-\hat{x}_n}}{r_n}&=\frac{\sin(\beta_n)\abs{x_n-b_n}}{r_n}\\
&\geq\sin\left(\frac{1}{2}\frac{|x_n-b_n|}{r_n} \right)\frac{|x_n-b_n|}{r_n}
\rightarrow\sin\left(\frac{1}{2}\frac{r}{t} \right)\frac{r}{t}>0.
\end{aligned}$$
This completes the proof of Claim 1.
       
\begin{figure}[h]

\begin{tikzpicture}[x=0.75pt,y=0.75pt,yscale=-.8,xscale=.8]

\draw [color={rgb, 255:red, 74; green, 144; blue, 226 }  ,draw opacity=1 ][line width=1.5]    (25,160) -- (265,160.6) ;
\draw   (71,143) .. controls (71,109.86) and (97.86,83) .. (131,83) .. controls (164.14,83) and (191,109.86) .. (191,143) .. controls (191,176.14) and (164.14,203) .. (131,203) .. controls (97.86,203) and (71,176.14) .. (71,143) -- cycle ;
\draw  [color={rgb, 255:red, 208; green, 2; blue, 27 }  ,draw opacity=1 ][fill={rgb, 255:red, 208; green, 2; blue, 27 }  ,fill opacity=1 ] (67.4,136.8) .. controls (67.4,134.7) and (69.1,133) .. (71.2,133) .. controls (73.3,133) and (75,134.7) .. (75,136.8) .. controls (75,138.9) and (73.3,140.6) .. (71.2,140.6) .. controls (69.1,140.6) and (67.4,138.9) .. (67.4,136.8) -- cycle ;
\draw  [color={rgb, 255:red, 208; green, 2; blue, 27 }  ,draw opacity=1 ][fill={rgb, 255:red, 208; green, 2; blue, 27 }  ,fill opacity=1 ] (158.4,92.8) .. controls (158.4,90.7) and (160.1,89) .. (162.2,89) .. controls (164.3,89) and (166,90.7) .. (166,92.8) .. controls (166,94.9) and (164.3,96.6) .. (162.2,96.6) .. controls (160.1,96.6) and (158.4,94.9) .. (158.4,92.8) -- cycle ;
\draw  [color={rgb, 255:red, 208; green, 2; blue, 27 }  ,draw opacity=1 ][fill={rgb, 255:red, 208; green, 2; blue, 27 }  ,fill opacity=1 ] (184.4,125.8) .. controls (184.4,123.7) and (186.1,122) .. (188.2,122) .. controls (190.3,122) and (192,123.7) .. (192,125.8) .. controls (192,127.9) and (190.3,129.6) .. (188.2,129.6) .. controls (186.1,129.6) and (184.4,127.9) .. (184.4,125.8) -- cycle ;
\draw  [color={rgb, 255:red, 208; green, 2; blue, 27 }  ,draw opacity=1 ][fill={rgb, 255:red, 208; green, 2; blue, 27 }  ,fill opacity=1 ] (184.4,159.8) .. controls (184.4,157.7) and (186.1,156) .. (188.2,156) .. controls (190.3,156) and (192,157.7) .. (192,159.8) .. controls (192,161.9) and (190.3,163.6) .. (188.2,163.6) .. controls (186.1,163.6) and (184.4,161.9) .. (184.4,159.8) -- cycle ;
\draw [color={rgb, 255:red, 74; green, 144; blue, 226 }  ,draw opacity=1 ][line width=1.5]    (344,161) -- (584,161.6) ;
\draw   (390,144) .. controls (390,110.86) and (416.86,84) .. (450,84) .. controls (483.14,84) and (510,110.86) .. (510,144) .. controls (510,177.14) and (483.14,204) .. (450,204) .. controls (416.86,204) and (390,177.14) .. (390,144) -- cycle ;
\draw  [color={rgb, 255:red, 208; green, 2; blue, 27 }  ,draw opacity=1 ][fill={rgb, 255:red, 208; green, 2; blue, 27 }  ,fill opacity=1 ] (386.4,137.8) .. controls (386.4,135.7) and (388.1,134) .. (390.2,134) .. controls (392.3,134) and (394,135.7) .. (394,137.8) .. controls (394,139.9) and (392.3,141.6) .. (390.2,141.6) .. controls (388.1,141.6) and (386.4,139.9) .. (386.4,137.8) -- cycle ;
\draw  [color={rgb, 255:red, 208; green, 2; blue, 27 }  ,draw opacity=1 ][fill={rgb, 255:red, 208; green, 2; blue, 27 }  ,fill opacity=1 ] (477.4,93.8) .. controls (477.4,91.7) and (479.1,90) .. (481.2,90) .. controls (483.3,90) and (485,91.7) .. (485,93.8) .. controls (485,95.9) and (483.3,97.6) .. (481.2,97.6) .. controls (479.1,97.6) and (477.4,95.9) .. (477.4,93.8) -- cycle ;
\draw  [color={rgb, 255:red, 208; green, 2; blue, 27 }  ,draw opacity=1 ][fill={rgb, 255:red, 208; green, 2; blue, 27 }  ,fill opacity=1 ] (503.4,126.8) .. controls (503.4,124.7) and (505.1,123) .. (507.2,123) .. controls (509.3,123) and (511,124.7) .. (511,126.8) .. controls (511,128.9) and (509.3,130.6) .. (507.2,130.6) .. controls (505.1,130.6) and (503.4,128.9) .. (503.4,126.8) -- cycle ;
\draw  [color={rgb, 255:red, 208; green, 2; blue, 27 }  ,draw opacity=1 ][fill={rgb, 255:red, 208; green, 2; blue, 27 }  ,fill opacity=1 ] (503.4,160.8) .. controls (503.4,158.7) and (505.1,157) .. (507.2,157) .. controls (509.3,157) and (511,158.7) .. (511,160.8) .. controls (511,162.9) and (509.3,164.6) .. (507.2,164.6) .. controls (505.1,164.6) and (503.4,162.9) .. (503.4,160.8) -- cycle ;
\draw [color={rgb, 255:red, 0; green, 0; blue, 0 }  ,draw opacity=1 ][line width=1.5]    (344,125.6) -- (582,126.6) ;
\draw  [color={rgb, 255:red, 208; green, 2; blue, 27 }  ,draw opacity=1 ][fill={rgb, 255:red, 208; green, 2; blue, 27 }  ,fill opacity=1 ] (477.4,126.8) .. controls (477.4,124.7) and (479.1,123) .. (481.2,123) .. controls (483.3,123) and (485,124.7) .. (485,126.8) .. controls (485,128.9) and (483.3,130.6) .. (481.2,130.6) .. controls (479.1,130.6) and (477.4,128.9) .. (477.4,126.8) -- cycle ;
\draw    (481.2,93.8) -- (481.2,126.8) ;
\draw    (481.2,93.8) -- (507.2,126.8) ;

\draw (164.2,89.4) node [anchor=south west] [inner sep=0.75pt]  [font=\small]  {$x_{n}$};
\draw (69.2,140.2) node [anchor=north east] [inner sep=0.75pt]  [font=\small]  {$a_{n}$};
\draw (190.2,122.4) node [anchor=south west] [inner sep=0.75pt]  [font=\small]  {$b_{n}$};
\draw (43,164.4) node [anchor=north west][inner sep=0.75pt]    {$\mathbb{S}^{1}$};
\draw (190.2,163.2) node [anchor=north west][inner sep=0.75pt]  [font=\small]  {$w_{n}$};
\draw (483.2,90.4) node [anchor=south west] [inner sep=0.75pt]  [font=\small]  {$x_{n}$};
\draw (388.2,141.2) node [anchor=north east] [inner sep=0.75pt]  [font=\small]  {$a_{n}$};
\draw (509.2,123.4) node [anchor=south west] [inner sep=0.75pt]  [font=\small]  {$b_{n}$};
\draw (362,165.4) node [anchor=north west][inner sep=0.75pt]    {$\mathbb{S}^{1}$};
\draw (509.2,164.2) node [anchor=north west][inner sep=0.75pt]  [font=\small]  {$w_{n}$};
\draw (483.2,130.2) node [anchor=north west][inner sep=0.75pt]  [font=\small]  {$\hat{x}_{n}$};
\draw (359.23,118) node [anchor=south] [inner sep=0.75pt]    {$L$};

\end{tikzpicture}
\caption{$C_n$ is not entirely contained in $\D$. However, in this case a positive proportion of $C_n$ is always contained in $\D.$}
\label{fig:case3_construction}
\end{figure}
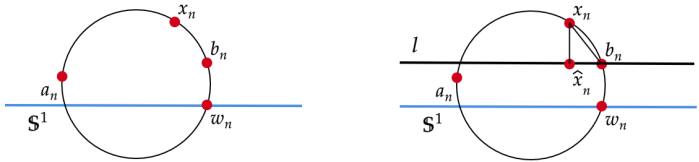

\end{proof}

\par
With Claim 1 in our hands, now we proceed to show that the point $d_n$ constructed above satisfies (3.19). Recall that we find ourselves in the Case 1.3.3, meaning 
that (3.17) holds. In light of the first limit in (3.17) and Claim 1, we see that the arc $C(a_n,b_n)$ is entirely contained in the upper half plane (See Figure \ref{fig:d_n_equidistant}). Thus its middle point $d_n$ is in the upper half plane as well. Furthermore, the second limit in (3.17) allows us to swap $x_n$ for $d_n$ in Claim 1. Thus (3.19) (and hence (3.18)) is satisfied by $d_n$. This completes the proof for Case 1.3.3.  
\begin{figure}[h]
    \centering
    \includegraphics[width=1\linewidth]{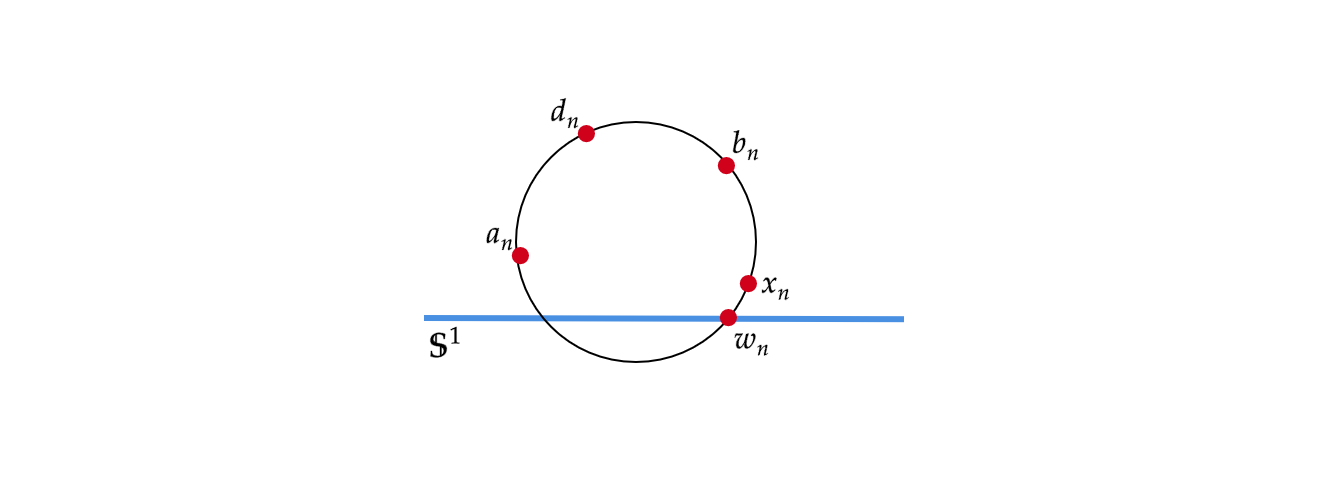}
    \caption{We choose $d_n$ equidistant from $a_n,b_n$}
    \label{fig:d_n_equidistant}
\end{figure}
\end{case}
\end{case}
\vspace{2mm}
\subsubsection{Non-separated configuration}
It still remains to consider the other configuration, when the points $a_n,b_n$ are not separated by $x_n$ in the above sense. To deal with this configuration, we use what we call a reflection method. We will first explain what this reflection operation is and then utilize it to achieve our end goal. Given  $a_n,b_n,x_n$ as in subsection 3.2.1 satisfying (3.4) such that $a_n,b_n$ are not separated by $x_n$, we let $\anr$ and $\bnr$ denote the points on $C_n$ obtained by reflecting $a_n$ and $b_n$, respectively, along the diameter of $C_n$ through $x_n$. When $C_n$ is a straight line, this is just the reflection about the point $x_n$ on the line. Note that $a_n$ and $\anr$ are separated by $x_n$, and so are $b_n$ and $\anr$. Furthermore, we have
\begin{equation}
\frac{\abs{x_n-a_n}}{\abs{x_n-\anr}}=1, \ \frac{\abs{x_n-b_n}}{\abs{x_n-\bnr}}=1
\end{equation}
for all $n$. In order to prove (3.5), we write the three point ratio as
\begin{equation}t_n'=\frac{\abs{x_n'-a_n'}}{\abs{x_n'-b_n'}}=\frac{\abs{x_n'-a_n'}}{\abs{x_n'-\anr'}}\cdot \frac{\abs{x_n'-\anr'}}{\abs{x_n'-b_n'}}. 
\end{equation}
As in the separated configuration above, we also consider three cases here. However, as noted above, Case 1.3 does not depend on the separation configuration. Therefore, we only need to deal with the remaining two cases.

\vspace{0.3cm}
\begin{case}{2.1} $x_n,a_n,b_n$ are collinear. 
In this case, as in Case 1.1, if we apply Lemma 2 to computing the limits of the two ratios on the right hand side of (3.22), we obtain that
\begin{equation}
   \lim_{n\rightarrow\infty}\frac{\abs{x_n'-a_n'}}{\abs{x_n'-\anr'}}
   =1, \ 
\lim_{n\rightarrow\infty}\frac{\abs{x_n'-\anr'}}{\abs{x_n'-b_n'}}=\lim_{n\rightarrow\infty}\frac{\abs{x_n-\anr}}{\abs{x_n-b_n}}=t. 
\end{equation}
Thus (3.5) follows from (3.22) and (3.23) as desired.

\end{case}
\vspace{0.3cm}
\begin{case}{2.2} Suppose the points $a_n,x_n,b_n$ are not collinear and (3.6) holds. By applying the result from Case 1.2 to the separated configurations $\{x_n, a_n, \anr\}$ and $\{x_n, b_n, \anr\}$, respectively, one concludes that (3.23) holds. Hence (3.5) follows from (3.22) and (3.23) in this case as well.
This completes the proof of Theorem 2.
\end{case}

\subsection{Boundary correspondence of $AS$ embeddings}
We complete the proof of Theorem 1 by establishing the following result.
\begin{thm}
    Let $f$ be an AS embedding of the unit disk onto a Jordan domain $G$. Then the boundary extension of $f$ to $\Ss$ is also $AS$. Moreover $f(\D)$ is a symmetric quasidisk.
\end{thm}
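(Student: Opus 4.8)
The plan is to prove the two assertions of Theorem 3 separately, using the sequential characterization of $AS$ maps from Proposition \ref{proposition:sequences_AS} as the main workhorse. First I would observe that since $G$ is a Jordan domain and $f$ is $AS$ (hence $QS$, by comparing (1.4) with (1.2) for a fixed $t$), $f$ extends to a homeomorphism of $\bar\D$ onto $\bar G$; I will keep calling this extension $f$. To show the boundary extension $f\colon\Ss\to\partial G$ is $AS$, I would argue by contradiction through sequences: suppose the restriction $f|_{\Ss}$ is not $AS$. Then there are $\epsilon>0$, $t>0$ and points $x_n,a_n,b_n\in\Ss$ lying in a ball of radius $1/n$ with $\abs{x_n-a_n}/\abs{x_n-b_n}\le t$ but $\abs{f(x_n)-f(a_n)}/\abs{f(x_n)-f(b_n)}>(1+\epsilon)t$. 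Passing to subsequences, $x_n,a_n,b_n$ converge to a common point $x\in\Ss$ and the source ratio converges to some $t_1\le t$.

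The key step is then a perturbation/limiting argument: I would replace each boundary triple by an interior triple. Concretely, choose $x_n^\ast,a_n^\ast,b_n^\ast\in\D$ with $\abs{x_n^\ast-x_n},\abs{a_n^\ast-a_n},\abs{b_n^\ast-b_n}$ so small (say, smaller than $\abs{x_n-a_n}\cdot\abs{x_n-b_n}/n$, and small relative to all pairwise distances among $x_n,a_n,b_n$) that the interior ratio $\abs{x_n^\ast-a_n^\ast}/\abs{x_n^\ast-b_n^\ast}$ still tends to $t_1$ and, by continuity of $f$ on $\bar\D$, the image ratio $\abs{f(x_n^\ast)-f(a_n^\ast)}/\abs{f(x_n^\ast)-f(b_n^\ast)}$ stays $>(1+\epsilon/2)t_1$ for large $n$ (using $t_1>0$; the degenerate cases $t_1=0$ follow from $QS$ directly, as already noted in the paper for (3.5)). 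The perturbed points are interior, converge to the same $x\in\bar\D$, and have source ratio $\to t_1$ but image ratio bounded below away from $t_1$, contradicting Proposition \ref{proposition:sequences_AS} applied to the $AS$ map $f$ on $\D$. Hence $f|_{\Ss}$ is $AS$. The main obstacle here is making sure the perturbation can be done while simultaneously (i) keeping the source ratio convergent to the right value and (ii) not destroying the quantitative failure of the target ratio; this is where one must be careful that the allowed perturbation size is small compared to the scale $\abs{x_n-a_n}\approx\abs{x_n-b_n}$ of the configuration, not merely compared to $1/n$.

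For the second assertion — that $f(\D)=G$ is a symmetric quasidisk — I would invoke the already-established equivalence. We have just shown the boundary extension of $f$ is $AS$, which is exactly condition (c) of Theorem \ref{thm:main_theorem}; by the implication (c)$\Rightarrow$(a), proved in \cite[Theorem 3.2]{brania2004asymptotically} and cited in the excerpt, $\partial G$ is a symmetric quasicircle, i.e. $G$ is a symmetric quasidisk. One should double-check that the hypotheses of the Brania–Yang result are met, namely that $f$ is a homeomorphism of $\bar\D$ onto $\bar G$ with $f|_{\Ss}$ asymptotically symmetric in their sense — but their notion of $AS$ on the circle agrees with Definition 1 restricted to $X=\Ss$, so this is immediate. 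Thus both claims of Theorem 3 follow, completing the chain (a)$\Rightarrow$(b)$\Rightarrow$(c)$\Rightarrow$(a) and hence the proof of Theorem \ref{thm:main_theorem}.
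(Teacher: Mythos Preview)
Your argument is essentially correct and reaches the right conclusion, but it takes a more circuitous route than the paper. The paper argues directly rather than by contradiction: given $\epsilon,t>0$, take the same $\delta$ that works for the $AS$ condition on $\D$; for any boundary triple $x,a,b\in\Ss$ lying in a $\delta$-ball with $\abs{x-a}/\abs{x-b}\le t$, set $x_n=r_nx$, $a_n=r_na$, $b_n=r_nb$ with $r_n\to 1^-$. Radial scaling preserves the source ratio \emph{exactly}, $\abs{x_n-a_n}/\abs{x_n-b_n}=\abs{x-a}/\abs{x-b}\le t$, so the interior $AS$ bound applies verbatim for every $n$, and continuity of the extension passes the inequality $\abs{f(x_n)-f(a_n)}/\abs{f(x_n)-f(b_n)}\le(1+\epsilon)t$ to the limit. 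No contradiction argument, no Proposition~\ref{proposition:sequences_AS}, no bookkeeping about how small the perturbation must be relative to the configuration scale. Your generic perturbation approach works, but the radial-scaling trick buys a much cleaner proof and sidesteps the separate treatment of $t_1=0$.

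One genuine caveat in your write-up: the parenthetical ``$AS$ hence $QS$, by comparing (1.4) with (1.2) for a fixed $t$'' is not justified. Condition (1.4) only controls triples contained in a ball of radius $\delta=\delta(\epsilon,t)$, so it does not hand you a global $\eta$; in fact, $AS\Rightarrow QS$ on $\D$ is essentially equivalent to what Theorem~\ref{thm:main_theorem} is establishing (via the symmetric-quasidisk detour and Theorem~A), so invoking it here is circular. Fortunately you do not need it. The extension of $f$ to $\bar\D$ follows from Carath\'eodory once you note, as the paper does, that an $AS$ embedding of $\D$ is conformal; and your $t_1=0$ case is already covered by Proposition~\ref{proposition:sequences_AS} (which is stated for $t\in[0,\infty]$), so there is no need to appeal to global $QS$ there either. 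With those two adjustments your proof goes through.
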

\begin{proof}
    First, we note that any $AS$ embedding of the unit disk is conformal. Thus it has a homeomorphic extension, denoted again by $f$, to the boundary $\Ss$. To show that $f$ is AS on $\Ss$, let $\epsilon>0$ and $t>0$ be given. Then choose $\delta>0$ such that the $AS$ condition (1.4) is satisfied for points in $\D$. We will show that, with the same $\delta$, the $AS$ condition (1.4) is also satisfied for points in $\Ss$  \par
    To proceed let $x,a,b\in\Ss$, such that they are all contained in a ball of radius $\delta$ with
    \[\frac{\abs{x-a}}{\abs{x-b}}\leq t.\]
    Let $r_n$ be a sequence of positive numbers such that $r_n<1$ and $r_n\to 1$ as $n\to \infty$. Furthermore let $x_n=r_nx, a_n=r_na$ and $b_n=r_nb$. Then, it is clear that $x_n,a_n,b_n\in\D$ are  contained in a ball of radius $\delta$ with
    \[
    \frac{\abs{x_n-a_n}}{\abs{x_n-b_n}}=\frac{r_n\abs{x-a}}{r_n\abs{x-b}}=\frac{\abs{x-a}}{\abs{x-b}}\leq t.
    \]
    Thus, by the AS condition (1.4) for $f$ in $\D$, we have 
    \[
    \frac{\abs{f(x_n)-f(a_n)}}{\abs{f(x_n)-f(b_n)}}\leq (1+\epsilon)t
    \]
    for all $n$. Taking $n$ to infinity and using the fact that $f$ is a homeomorphism, we reach the desired result that 
    \[
    \frac{\abs{f(x)-f(a)}}{\abs{f(x)-f(b)}}\leq (1+\epsilon)t.
    \]
    Hence $f|_{\Ss}$ is an $AS$ embedding, and by \cite[Theorem 3.2]{brania2004asymptotically}, we conclude that $f(\Ss)$ is a symmetric quasicircle. 
\end{proof}
Observe that using the same idea as above, one can easily show that the extension of $f$ in Theorem 3 is actually $AS$ on the closed disk $\Dc$. We record this result as a corollary.
\begin{cor}
    If $f$ is an $AS$ embedding of the unit disk $\D$ onto a Jordain domain $G$, then its extension to $\Dc$, is an $AS$ embedding of the closed unit disk $\Dc$ onto $\Bar{G}.$
\end{cor}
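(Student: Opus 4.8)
The plan is to leverage Theorem~3 together with the scaling trick already used there, now applied in both directions. For the forward direction, suppose $f$ is an $AS$ embedding of $\D$. By Theorem~3 its homeomorphic extension to $\Ss$ (which I will still call $f$) is $AS$ on $\Ss$, and $G=f(\D)$ is a symmetric quasidisk. So we already control the behavior of $f$ separately on the open disk and on the circle; what remains is to handle triples of points where some lie in $\D$ and others lie on $\Ss$, all inside a small ball. The key observation is that the radial projection $z\mapsto z/|z|$ of $\Dc\setminus\{0\}$ onto $\Ss$ is, near any boundary point $x_0\in\Ss$, an arbitrarily good similarity: for points within a ball of radius $\delta$ around $x_0$, the ratio $|z/|z|-w/|w||/|z-w|$ tends to $1$ as $\delta\to 0$. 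This lets one replace an arbitrary small triple in $\Dc$ by its radial projection onto $\Ss$ at negligible multiplicative cost in the ratios $|x-a|/|x-b|$, then apply the $AS$ property of $f|_{\Ss}$, then transfer back using continuity of $f$ and the fact that $f$ maps radial approach to boundary approach.

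Concretely, the steps I would carry out are as follows. First, invoke Theorem~3 to get that $f|_{\Ss}$ is $AS$ and $G$ is a symmetric quasidisk; in particular $\partial G$ is a quasicircle, so $f$ is quasisymmetric on $\Dc$ (Theorem~A), which will be needed to keep image ratios under control while passing to limits. Second, fix $\epsilon>0$ and $t>0$; choose $\delta_0$ so small that for any two points $z,w\in\Dc$ lying in a common ball of radius $\delta_0$ one has $|z-w|\le(1+\epsilon')|\,|z|^{-1}z-|w|^{-1}w\,|$ and vice versa, where $\epsilon'$ is chosen (using the $\eta$-function of the quasisymmetry together with the desired $\epsilon$) small enough that the distortion introduced in the image side is swallowed by $(1+\epsilon)$. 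Third, given $x,a,b\in\Dc$ in a ball of radius $\delta\le\delta_0$ with $|x-a|/|x-b|\le t$, let $\tilde x,\tilde a,\tilde b$ be their radial projections onto $\Ss$; then $|\tilde x-\tilde a|/|\tilde x-\tilde b|\le(1+\epsilon')^2 t$, so the $AS$ property of $f|_{\Ss}$ (applied with a slightly inflated target parameter and a matching $\delta$) gives $|f(\tilde x)-f(\tilde a)|/|f(\tilde x)-f(\tilde b)|\le(1+\epsilon'')(1+\epsilon')^2 t$. Fourth, relate $f(z)$ to $f(\tilde z)$: since $z$ and $\tilde z$ lie on the same radius and $|z-\tilde z|\to 0$, quasisymmetry of $f$ on $\Dc$ forces $|f(z)-f(\tilde z)|/(\text{relevant gauge length})$ to be small, so the image ratios for the original triple differ from those for the projected triple by a factor arbitrarily close to $1$ as $\delta\to 0$. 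Collecting the three multiplicative losses and choosing $\epsilon',\epsilon''$ appropriately yields $|f(x)-f(a)|/|f(x)-f(b)|\le(1+\epsilon)t$, which is exactly the $AS$ condition on $\Dc$.

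I expect the main obstacle to be the fourth step: controlling $|f(z)-f(\tilde z)|$ relative to the scales $|f(x)-f(a)|$ and $|f(x)-f(b)|$ rather than in absolute terms. A crude bound "$f$ is continuous so $|f(z)-f(\tilde z)|$ is small" is not enough, because the ratio $|f(x)-f(a)|/|f(x)-f(b)|$ could be sensitive to perturbations comparable to $|f(x)-f(a)|$ itself, and a priori $|z-\tilde z|$ need not be small compared to $|x-a|$. The clean fix is to argue as in Proposition~\ref{proposition:sequences_AS}: reduce the $AS$ statement on $\Dc$ to the sequential formulation, take sequences $x_n,a_n,b_n\in\Dc$ converging to a common boundary point with $|x_n-a_n|/|x_n-b_n|\to t$, and show the image ratios converge to $t$. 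If infinitely many of the triples have all three points on $\Ss$ we are done by Theorem~3; if all three are in $\D$ the reduction in Section~3.2.1 (Cauchy integral formula when the limit is interior, else the machinery of Theorem~2) applies; the genuinely new case is a mixed sequence, and there one replaces each $z_n$ by $\tilde z_n=z_n/|z_n|$ and notes $|\tilde z_n-z_n|=1-|z_n|\to 0$ while the relevant denominators $|x_n-b_n|$ need not vanish faster — one checks that either $|x_n-b_n|$ stays bounded below, in which case all ratios degenerate to an interior-type situation handled by continuity, or $|x_n-b_n|\to 0$, in which case $|\tilde z_n-z_n|/|x_n-b_n|\to 0$ follows from the fact that $x_n,b_n$ converge to the same boundary point at least as fast, by a short geometric estimate comparing $1-|z_n|$ to chord lengths near $\Ss$. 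This last geometric estimate, routine but needing care about which point is "deepest" in $\D$, is the only real content; everything else is assembly. The corollary then follows by remarking that the same computation, with no change, gives the $AS$ inequality for all triples in $\Dc$ and that $f(\Dc)=\bar G$ by the homeomorphic boundary extension.
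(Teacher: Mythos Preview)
Your proposal takes a much harder route than necessary, and the obstacle you flag in the fourth step is real and not resolved by the argument you sketch. The paper's intended proof is a one-line adaptation of the proof of Theorem~3: given distinct $x,a,b\in\Dc$ contained in a ball of radius $\delta$ with $\abs{x-a}/\abs{x-b}\le t$, set $x_n=r_nx$, $a_n=r_na$, $b_n=r_nb$ for $r_n\uparrow 1$. Since $\abs{r_nz}\le r_n<1$ for $z\in\Dc$, these lie in $\D$; since scaling by $r_n$ is a similarity, the three points are still contained in a ball of radius $\delta$ and the ratio $\abs{x_n-a_n}/\abs{x_n-b_n}$ equals $\abs{x-a}/\abs{x-b}\le t$ \emph{exactly}. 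Now apply the $AS$ property of $f$ on $\D$ and let $n\to\infty$ using continuity of the extension. No error terms, no case analysis, no quasisymmetry needed.

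The trouble with your approach is that you project \emph{outward} via $z\mapsto z/\abs{z}$ rather than scale \emph{inward} via $z\mapsto r_nz$. Radial projection is not a similarity and does not preserve the ratio $\abs{x-a}/\abs{x-b}$; the error is of order $(1-\abs{z})/\abs{x-b}$, and this need not go to zero. For instance, take $x_n,b_n\in\Ss$ with $\abs{x_n-b_n}\to 0$ and $a_n\in\D$ with $1-\abs{a_n}=\abs{x_n-b_n}$: then $\abs{a_n-\tilde a_n}=\abs{x_n-b_n}$, so projecting $a_n$ can shift $\abs{x_n-a_n}/\abs{x_n-b_n}$ by an additive constant close to $1$, not by a multiplicative factor close to $1$. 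Your proposed dichotomy (``either $\abs{x_n-b_n}$ stays bounded below, or $(1-\abs{z_n})/\abs{x_n-b_n}\to 0$'') is false in this example. The fix is not more geometry; it is to scale the other way, which you actually announce in your opening sentence (``the scaling trick already used there'') but then abandon.
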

\section{Final remarks and open problems}
We conclude this paper with some final remarks and open problems. Let $f$ be an embedding of the unit disk into the complex plane. For $x\in \D$ and $r>0$, set 
\[
H_f(x,r)=\frac{\sup\{\abs{f(z)-f(x)}: \abs{z-x}=r, z\in\D\}}{\inf\{\abs{f(y)-f(x)}: \abs{y-x}=r, y\in \D\}}.
\]
Recall from the metric definition, we say that $f$ is $K$-QC, if there exists a $K<\infty$, so that 
\begin{align}\label{eq:K-QC}
\limsup_{r\to 0}H_f(x,r)\leq K
\end{align}
for all $x\in \D$. Moreover, we say that $f$ is $1$-QC if (\ref{eq:K-QC}) holds with $K=1$. It is well known that this is equivalent to the classical definition of conformal maps in the complex plane. We also note that in this case $\limsup$ can be replaced by the limit. Thus an embedding $f:\D\rightarrow \C$ is conformal if and only if
\begin{align}
\lim_{r\to 0}H_f(x,r)=1   
\end{align}
for all $x\in \D$. In fact, (4.2) holds for 1-QC maps in any metric spaces.

Motivated by this limit characterization of conformal maps, one can introduce the concept of {\it uniform conformality} by requiring that the above limit is achieved uniformly. 
\begin{definition}\label{def:uniformly_conformal}
    An embedding $f$ of the unit disk $\D$ into the complex plane, is uniformly conformal if for any $\epsilon>0$ there exists a $\delta>0$ such that for all $0<r<\delta$ and all $x\in\D$,
    $$H_f(x,r)\leq 1+\epsilon.$$
\end{definition}

Combining several results together, we derive the following corollary. 
\begin{cor}\label{cor_2}
    Let $f: \D\rightarrow G$ be a conformal map of the unit disk onto a Jordan domain.
If $J=\partial G$ is a symmetric quasicircle, then $f$ is uniformly conformal on $\D$.
\end{cor}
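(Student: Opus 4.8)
The plan is to deduce Corollary~\ref{cor_2} directly from the chain of equivalences in Theorem~\ref{thm:main_theorem} together with the elementary relationship between the $AS$ condition and uniform conformality. First I would observe that, by Theorem~\ref{thm:main_theorem} (specifically the implication (a)$\Rightarrow$(b), i.e.\ Theorem~\ref{thm:a_ra_b}), the hypothesis that $J=\partial G$ is a symmetric quasicircle forces the conformal map $f\colon\D\to G$ to be an $AS$ embedding. So the entire content of the corollary reduces to the implication: \emph{every $AS$ embedding of $\D$ into $\C$ is uniformly conformal.}

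Next I would prove that implication by a short argument comparing Definition~\ref{def:uniformly_conformal} with the $AS$ condition~(1.4). Fix $\epsilon>0$. Apply the $AS$ definition with parameter $t=1$ and this $\epsilon$ to obtain a $\delta>0$ such that whenever $x,a,b\in\D$ lie in a ball of radius $\delta$ and $\abs{x-a}\le\abs{x-b}$, we have $\abs{f(x)-f(a)}\le(1+\epsilon)\abs{f(x)-f(b)}$. Now take any $x\in\D$ and any $0<r<\delta/2$ (so that the circle $\{z\in\D:\abs{z-x}=r\}$ lies in the ball $B(x,\delta)$), and let $z_1,z_2$ realize (or nearly realize, up to an arbitrarily small slack that can be absorbed) the supremum and infimum defining $H_f(x,r)$; since $\abs{x-z_1}=\abs{x-z_2}=r$, the $AS$ condition applied with $a=z_1$, $b=z_2$ gives $\abs{f(x)-f(z_1)}\le(1+\epsilon)\abs{f(x)-f(z_2)}$, i.e.\ $H_f(x,r)\le 1+\epsilon$. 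Since $x$ and $r<\delta/2$ were arbitrary, $f$ is uniformly conformal. (The minor point about sup/inf not being attained on the intersection of a circle with $\D$ is handled by choosing $z_1,z_2$ within a factor $1+\epsilon'$ of the extremes and letting $\epsilon'\to0$.)

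I expect the main (and essentially only) obstacle to be purely bookkeeping: making sure the radius condition in Definition~\ref{def:uniformly_conformal} ("for all $0<r<\delta$") is correctly matched with the "contained in a ball of radius $\delta$" condition in the $AS$ definition --- two points on a circle of radius $r$ about $x$ together with $x$ itself all lie in $B(x,2r)$, hence one must take $r<\delta/2$ rather than $r<\delta$, but since $\delta$ is at our disposal this is harmless. A secondary routine point is that the sup and inf in $H_f(x,r)$ are over $z,y$ \emph{in} $\D$ on the circle $\abs{z-x}=r$, which need not be closed, so one works with near-extremizers. Neither issue is deep; the substance of the corollary is entirely carried by Theorem~\ref{thm:a_ra_b}, and the remainder is a two-line unwinding of definitions.
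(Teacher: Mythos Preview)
Your proposal is correct and follows exactly the same approach as the paper: invoke Theorem~\ref{thm:main_theorem} (a)$\Rightarrow$(b) to get that $f$ is $AS$, then specialize the $AS$ condition~(\ref{AS:property}) to $t=1$ to read off uniform conformality. The paper's proof is just these two sentences without the bookkeeping details you (correctly) spell out about $\delta/2$ and near-extremizers.
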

\begin{proof} By Theorem 1, $f$ is asymptotically symmetric in $\D$. By letting $t=1$ in the {\it AS} condition (1.4), one deduces that $f$ is uniformly conformal in $\D$.
\end{proof}

It remains open whether the symmetric quasidisk property is also necessary for a conformal map $f: \D\rightarrow G$ to be uniformly conformal. Another related open question is whether the {\it AS} property (1.4) with $t=1$ implies the same property for all $t$. As far as we know, this is open even in the unit disk setting. We hope to explore these and other related problems in another project.
\bibliographystyle{alpha}
\bibliography{References}

\end{document}